\documentclass[11pt]{amsart}
\usepackage{txfonts}
\usepackage{amscd,amssymb}
\usepackage{graphicx}
\usepackage[colorlinks,plainpages,backref,urlcolor=blue]{hyperref}
\usepackage{tikz}
\usetikzlibrary{cd, calc, arrows}
\usepackage{enumitem}

\numberwithin{equation}{section}

\newtheorem{theorem}{Theorem}[section]
\newtheorem{corollary}[theorem]{Corollary}
\newtheorem{lemma}[theorem]{Lemma}
\newtheorem{prop}[theorem]{Proposition}

\theoremstyle{definition}
\newtheorem{definition}[theorem]{Definition}
\newtheorem{example}[theorem]{Example}
\newtheorem{remark}[theorem]{Remark}
\newtheorem{question}[theorem]{Question}

\newcommand{\A}{\mathcal{A}}
\newcommand{\B}{\mathcal{B}}
\newcommand{\VV}{\mathcal{V}}
\newcommand{\RR}{\mathcal{R}}
\newcommand{\XX}{\mathcal{X}}
\newcommand{\cN}{\mathcal{N}}

\newcommand{\C}{\mathbb{C}}
\newcommand{\Z}{\mathbb{Z}}
\newcommand{\R}{\mathbb{R}}
\newcommand{\Q}{\mathbb{Q}}
\newcommand{\N}{\mathbb{N}}

\newcommand{\FF}{\mathbb{F}}
\newcommand{\CP}{\mathbb{CP}}
\renewcommand{\P}{\mathbb{P}}
\newcommand{\bo}{\mathbf{1}}

\newcommand{\ii}{\mathrm{i}}

\newcommand{\codim}{\operatorname{codim}}

\newcommand{\depth}{\operatorname{depth}}
\newcommand{\mult}{\operatorname{mult}}

\newcommand{\apl}{A_{\rm PL}}

\DeclareMathOperator{\GL}{GL}
\DeclareMathOperator{\rank}{rank}
\DeclareMathOperator{\Ch}{Ch}
\DeclareMathOperator{\TC}{TC}
\DeclareMathOperator{\ab}{ab}

\DeclareMathOperator{\Hom}{Hom}
\DeclareMathOperator{\Conf}{Conf}
\DeclareMathOperator{\spn}{span}

\renewcommand{\k}{\Bbbk}

\newcommand{\inj}{\hookrightarrow}
\newcommand\isom{\xrightarrow{
 \,\smash{\raisebox{-0.6ex}{\ensuremath{\scriptstyle\simeq}}}\,}}

\definecolor{lime}{HTML}{A6CE39}
\DeclareRobustCommand{\orcidicon}{
	\begin{tikzpicture}
	\draw[lime, fill=lime] (0,0) 
	circle [radius=0.16] 
	node[white] {{\fontfamily{qag}\selectfont \tiny ID}};
	\draw[white, fill=white] (-0.0625,0.095) 
	circle [radius=0.007];
	\end{tikzpicture}
	\hspace{-2mm}
}
\foreach \x in {A, ..., Z}{\expandafter\xdef\csname orcid\x\endcsname{\noexpand\href{%
https://orcid.org/\csname orcidauthor\x\endcsname}{\noexpand\orcidicon}}}

\begin{document}

\title[Arrangements with non-formal Milnor fibers]%
{Hyperplane arrangements with non-formal \\ Milnor fibers}

\author[Alexander~I.~Suciu]{Alexander~I.~Suciu$^1$\!\!\orcidA{}}
\address{Department of Mathematics,
Northeastern University,
Boston, MA 02115, USA}
\email{\href{mailto:a.suciu@northeastern.edu}{a.suciu@northeastern.edu}}
\urladdr{\href{https://suciu.sites.northeastern.edu}%
{suciu.sites.northeastern.edu}}
\thanks{$^1$Partially supported by the project ``Singularities and Applications" - CF 132/31.07.2023 funded by
the European Union - NextGenerationEU - through Romania’s National Recovery 
and Resilience Plan.}

\dedicatory{Dedicated to Mike Davis on the occasion of his 75th birthday}

\subjclass[2020]{Primary
32S55,  
Secondary
14C21,  
14F35, 
20F55,  
32S22,  
32S35,  
55N25, 
57M10.  
}

\keywords{Hyperplane arrangement, Milnor fibration, 
monodromy, formality, mixed Hodge structure, characteristic variety, 
resonance variety, multinet, complex reflection group}

\begin{abstract}
Each complex hyperplane arrangement $\A$ gives rise to a 
Milnor fibration of its complement. Building on work of Zuber, 
we give a combinatorial sufficient condition for the Milnor fiber 
$F(\A)$ to be non-$1$-formal, expressed in terms of the multinet 
structure on $\A$, and use it to produce an infinite family of 
monomial arrangements $\A(3k,3k,3)$ with non-formal Milnor fibers. 
We also review the relevant background on cohomology jump loci, 
formality, and the topology of Milnor fibers of arrangements. 
\end{abstract}

\maketitle

\section{Introduction}
\label{sec:intro}

Let $\A$ be a central arrangement of hyperplanes in $\C^{\ell}$,
with complement $M=M(\A)$, projectivized complement $U=U(\A)$,
and Milnor fiber $F=F(\A)$. It is well known that the complement
$M$ is formal \cite{Br}. Since the Milnor fiber $F$ is a smooth
affine variety, Morgan's theorem \cite{Morgan} does not guarantee
its $1$-formality: indeed, $W_1(F)=H^1(F;\C)_{\ne 1}$ need not
vanish, so the criterion that forces $1$-formality for complements
of projective hypersurfaces does not apply. Thus, unlike the
complement $M$, the Milnor fiber lies outside the range where
mixed Hodge theory forces formality. This raises the question,
first posed explicitly in \cite{PS-formal}:

\begin{question}[\cite{PS-formal}]
\label{q:milnorf}
Is the Milnor fiber $F(\A)$ of a hyperplane arrangement always
$1$-formal?
\end{question}

Zuber \cite{Zu} gave the first negative answer, showing that $F$
is not $1$-formal for the Ceva arrangement $\A(3,3,3)$.
In this paper, we give a combinatorial sufficient condition for
the non-$1$-formality of $F(\A)$, in terms of the multinet
structure on $\A$, and use it to produce an infinite family
of arrangements with non-formal Milnor fibers.

\begin{theorem}
\label{thm:general}
Let $\A$ be a central arrangement of $n$ hyperplanes in $\C^\ell$
supporting at least two distinct reduced $3$-multinets.
Then the Milnor fiber $F(\A)$ is not $1$-formal.
\end{theorem}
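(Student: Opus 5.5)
The plan is to use the standard obstruction from the Tangent Cone Theorem. If a space $X$ is $1$-formal, then for each $i$ the tangent cone at $\bo$ of the characteristic variety $\VV^1_i(X)$ equals the resonance variety $\RR^1_i(X)$. Moreover, every irreducible component of $\RR^1_i(X)$ is then a rational linear subspace. So it suffices to exhibit, for $X=F=F(\A)$, a failure of one of these properties. Concretely, I aim to show that $\RR^1_1(F)$ has two positive-dimensional components that meet non-transversally, or else that it fails to match $\TC_\bo(\VV^1_1(F))$. Following Zuber's argument for $\A(3,3,3)$, the natural route is through the regular $\Z_n$-cover $F\to U$ and the way the characteristic varieties of $U$ pull back to $F$.

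The first step is to turn each reduced $3$-multinet $\cN$ on $\A$ into a pencil, i.e. an orbifold fibration $f_\cN\colon U\to \CP^1\setminus\{3 \text{ points}\}$ in the sense of Falk--Yuzvinsky and Pereira. Because $\cN$ is reduced, all multiplicities equal $1$. This means the induced map on $\pi_1$ of the restriction to $F$ is controlled, and its pullback gives a genuine component of $\VV^1_1(U)$. That component is a $2$-dimensional subtorus $T_\cN$, and its tangent space at $\bo$ is a component $L_\cN$ of $\RR^1_1(U)$.

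Next I will compose with the covering $p\colon F\to U$, which is classified by the character $\sigma=(\zeta,\dots,\zeta)$ with $\zeta=e^{2\pi\ii/n}$. For a reduced $3$-multinet, $n$ is divisible by $3$, and $\sigma$ restricted to the fibers of $f_\cN$ is a cube-root character. The lift $f_\cN\circ p$ therefore factors through an orbifold map from $F$ onto a triple cover of the thrice-punctured sphere. For $\A(3,3,3)$ this cover is an elliptic curve minus three points, and in general it is an orbifold curve of positive genus. Pulling back its positive-dimensional jump loci gives components $W_\cN\subset \VV^1_1(F)$, and their tangent spaces $E_\cN=\TC_\bo(W_\cN)$ give components of $\RR^1_1(F)$. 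The latter are described by the cup product on $H^1(F;\C)$, which is the $1$-eigenspace part together with the non-trivial eigenspace part $H^1(F;\C)_{\ne1}$, and by the fact that $p^*\colon H^1(U)\to H^1(F)_1$ is an isomorphism.

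The key step is to take two distinct reduced $3$-multinets $\cN,\cN'$ and show that the corresponding subspaces $E_\cN,E_{\cN'}\subset H^1(F;\C)$ intersect in a nonzero subspace contained in the non-trivial eigenspaces. This is where the proof differs from the situation for $U$, where distinct pencil components meet only at $0$. Concretely, the cup products of classes from the $\zeta$-eigenspace coming from $\cN$ and from $\cN'$ should vanish, which produces a nonzero common isotropic vector in both. Since distinct components of $\RR^1_1$ of a $1$-formal space must intersect only at $0$ (Dimca--Papadima--Suciu), this contradiction proves non-$1$-formality. I expect the main obstacle to be this computation. It requires identifying $H^1(F)_{\ne1}$, with eigenvalues cube roots of unity, as spanned by pullbacks from both pencils, and checking that the two pullbacks share a class while the $1$-eigenspace parts stay independent. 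To do this I would first compute the $\zeta$-eigenspace dimension (Papadima--Suciu: it equals $\beta_3(\A)$ in the reduced multinet case). I would then use the two multinets to produce two classes and compare them via the Alexander module.
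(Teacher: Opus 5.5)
Your key step cannot be carried out, because the statement it aims at is false. For two distinct reduced $3$-multinets one always has $E_\cN\cap E_{\cN'}=\{0\}$, whether or not $F$ is $1$-formal.

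\begin{itemize}
\item Each lifted pencil $\tilde\psi_\cN\colon F\to\Sigma_{1,3}$ is admissible. Note that the target is always $\Sigma_{1,3}$, the $\Z_3$-cover of $\Sigma_{0,3}$ determined by $\bar\rho_n=\rho_n^{n/3}$; it does not depend on $\A$. So $\tilde\psi_\cN^*\Ch(\Sigma_{1,3})$ is a $4$-dimensional subtorus component of $\VV^1_1(F)$ through $\bo$, with Lie algebra $E_\cN$.
\item Since $\tilde\psi_\cN$ is equivariant, the eigenvalue-$1$ part of $E_\cN$ is the image of $P_\cN$ under $H^1(U;\C)\cong H^1(F;\C)_1$. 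Hence $E_\cN\ne E_{\cN'}$, and the two tori are distinct components.
\item On any smooth quasi-projective variety, distinct components of $\VV^1_1$ through $\bo$ meet in a finite set \cite{DPS-duke}. This is the same fact the paper uses for \eqref{eq:disjoint}. Two subtori with finite intersection have Lie algebras meeting only in $0$.
\end{itemize}

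So the disjointness you want to contradict holds unconditionally for this pair, and no Alexander-module computation can produce a common class. The heuristic that vanishing cross cup products ``produces a common isotropic vector'' is a non sequitur: mutual orthogonality of two subspaces says nothing about their intersection. Separately, your appeal to $\dim H^1(F;\C)_{e^{2\pi\ii/3}}=\beta_3(\A)$ needs the multiplicity hypothesis on $L_2(\A)$. That hypothesis fails, for instance, for $\A(3k,3k,3)$ with $k>1$.

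What is missing is a third linear subspace that is forced into $\RR^1_1(F)$ and straddles the components. In the paper this is $W_1(F)=H^1(F;\C)_{\ne 1}$.

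\begin{itemize}
\item Zuber's weight argument (Lemma~\ref{lem:cup-vanish}) gives $W_1(F)\subset\RR^1_1(F)$. For $\alpha\in H^1(F;\C)_\lambda$ and $\beta\in H^1(F;\C)_{\bar\lambda}$, the product $\alpha\cup\beta$ is monodromy-invariant. It therefore lies in $H^2(F;\C)_1\cong H^2(U;\C)$, which is pure of weight $4$, while $\alpha\cup\beta$ has weight at most $2$. So it vanishes.
\item Assuming $1$-formality, the Tangent Cone Theorem makes $\RR^1_1(F)$ a finite union of tangent spaces to components of $\VV^1_1(F)$ through $\bo$, pairwise meeting in $0$. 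The linear space $W_1(F)$ therefore lies in one of them.
\item Since $W_1(F)$ meets $E_{\cN_1}$ in $E^{1,0}\oplus E^{0,1}\ne 0$, that component must be $E_{\cN_1}$.
\item The paper uses the second multinet only for a dimension count. By Lemma~\ref{lem:rho-on-torus}, $\bar\rho_n$ is a torsion point of $T_1\cap T_2$. Theorem~\ref{thm:acm} then places it in $\VV^1_2(U)$, so $e_3(\A)\ge 2$.
\item Hence $\dim W_1(F)\ge 4>2=\dim(E_{\cN_1}\cap W_1(F))$, a contradiction.
\end{itemize}

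Alternatively, your two pencils can be used at exactly this point. The single component containing $W_1(F)$ would have to coincide with both $E_\cN$ and $E_{\cN'}$, and these are different, as shown above. Either way, the contradiction has to pass through $W_1(F)$ and the weight argument, not through $E_\cN\cap E_{\cN'}$.
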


\begin{theorem}
\label{thm:monomial}
For every integer $k\ge 1$, the Milnor fiber of the
monomial arrangement $\A(3k,3k,3)$ is not $1$-formal.
\end{theorem}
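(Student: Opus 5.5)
The statement to prove is Theorem~\ref{thm:monomial}. The plan is to deduce it from Theorem~\ref{thm:general}. By that theorem, it suffices to exhibit, for each $k\ge 1$, two distinct reduced $3$-multinets on $\A(3k,3k,3)$. Concretely, this is the arrangement in $\C^3$ with defining polynomial
\[
Q=xyz\,(x^{3k}-y^{3k})(y^{3k}-z^{3k})(x^{3k}-z^{3k}),
\]
so it has $9k+3$ planes.

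\textbf{The first multinet.} This is the standard one, pulled back from the Ceva pencil. Take the three classes
\[
\A_1=\{x^{3k}=y^{3k}\},\qquad \A_2=\{y^{3k}=z^{3k}\},\qquad \A_3=\{x^{3k}=z^{3k}\},
\]
each with $3k$ lines of multiplicity one. These classes are the fibers of the pencil spanned by $x^{3k}-y^{3k}$ and $y^{3k}-z^{3k}$, whose base locus consists of the $(3k)^2$ triple points $\{x^{3k}=y^{3k}=z^{3k}\}$. The coordinate lines $x,y,z$ are left out; I need to check that this is acceptable. If the definition in the paper requires a multinet on all of $\A$, I would instead regard it as a multinet on a subarrangement. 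The point is that Theorem~\ref{thm:general} is presumably proved via resonance and characteristic varieties, where multinets on subarrangements still give essential components. If the theorem genuinely needs full support, I will use the alternative classes
\[
\{x^{3k}=y^{3k}\}\cup\{z\},\qquad \{y^{3k}=z^{3k}\}\cup\{x\},\qquad \{x^{3k}=z^{3k}\}\cup\{y\},
\]
which come from the pencil generated by $z^{3k}(x^{3k}-y^{3k})$ and $x^{3k}(y^{3k}-z^{3k})$. In that case I must check the multiplicities needed to keep the multinet reduced.

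\textbf{The second multinet.} This comes from a different Ceva-type pencil. Write $\zeta=e^{2\pi\ii/3k}$. Group the lines $x=\zeta^a y$, $y=\zeta^b z$, $x=\zeta^c z$ according to residues mod $3$, equivalently via the subgroup map $\A(3k,3k,3)\to\A(3,3,3)$ induced by $(x,y,z)\mapsto(x^k,y^k,z^k)$, twisted by a different choice of roots of unity. The cleanest concrete choice is a sub-Ceva arrangement: for a fixed triple $(a,b,c)$ with $a+b\equiv c \pmod{3k}$, the nine lines
\[
x=\zeta^{a+3i}y,\qquad y=\zeta^{b+3j}z,\qquad x=\zeta^{c+3l}z
\]
form a copy of $\A(3,3,3)$ up to linear change of coordinates. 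Two different such triples give two distinct reduced $3$-nets, hence two distinct reduced $3$-multinets supported on subarrangements of $\A(3k,3k,3)$.

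\textbf{Verification.} For $k=1$ this is the Ceva case, which has exactly the one multinet above. There I would simply invoke Zuber's result, or check that the Hessian-type second $3$-net is absent and appeal directly to \cite{Zu}. For $k\ge 2$, I verify the net axioms for each candidate: lines of distinct classes meet only at points where all three classes appear exactly once, which here are the points $\{x^{3k}=y^{3k}=z^{3k}\}$, each of multiplicity $3$. I then check that the two multinets are distinct, meaning not related by permutation of classes.

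\textbf{Main obstacle.} The hard step is matching exactly the hypotheses of Theorem~\ref{thm:general}: whether multinets supported on subarrangements are allowed, and whether reducedness must hold on all of $\A$. I would first try to settle this by checking that the associated resonance components, via restriction, still produce the nontrivial triple Massey products or non-vanishing Alexander-type obstruction used in the proof of Theorem~\ref{thm:general}.
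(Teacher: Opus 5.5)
Your strategy is the paper's: verify the hypothesis of Theorem~\ref{thm:general}. The execution, however, has two genuine gaps.

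\textbf{Wrong arrangement.} In the paper, $\A(m,m,3)$ is the reflection arrangement of $G(m,m,3)$, defined by $(z_1^m-z_2^m)(z_1^m-z_3^m)(z_2^m-z_3^m)$. It has $n=3m=9k$ hyperplanes and no coordinate hyperplanes. Your $Q$, with the extra factor $xyz$, defines the full monomial arrangement $\A(3k,1,3)$; for $k=1$ this is not the Ceva arrangement. For $\A(3k,1,3)$ the statement you set out to prove is false.
\begin{itemize}
\item Its three vertices have multiplicity $3k+2$. All lines through such a point must lie in one class of a $3$-net, and this forces a single class, so there is no $3$-net.
\item Your full-support pencil is a non-reduced multinet with $m_H=3k$ on the coordinate lines. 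Its component cannot contain $\bar\rho_n$: its characters send $\bar\gamma_H\mapsto t_i^{m_H}$ with $t_1t_2t_3=1$, so containing $\bar\rho_n$ would force both $t_i=e^{2\pi\ii/3}$ and $t_i^{3k}=e^{2\pi\ii/3}$, which is impossible.
\item The line $x=0$ contains only two points of multiplicity $\ge 3$, both $(3k+2)$-fold. By Libgober's divisibility theorem, every monodromy eigenvalue on $H^1(F;\C)$ is then both a $(9k+3)$-th and a $(3k+2)$-th root of unity. Since $\gcd(9k+3,3k+2)=1$, every eigenvalue equals $1$.
\end{itemize}
So that Milnor fiber is formal by Corollary~\ref{cor:mf-formal}, and no resolution of your ``main obstacle'' can rescue the argument there.

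\textbf{Second multinet.} Even on the correct arrangement, your second multinet does not meet the hypothesis. Theorem~\ref{thm:general} concerns multinets on all of $\A$ (Definition~\ref{def:multinet} partitions $\A$). This is essential, because Lemma~\ref{lem:rho-on-torus} uses that every $H\in\A$ lies in some class. A net on a sub-Ceva subarrangement gives a \emph{non-essential} component of $\VV^1_1(U)$. Such a component meets the diagonal subtorus only at $\bo$, so it does not contain $\bar\rho_n$ --- the opposite of your guess.

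Moreover, for $k\ge 2$ no second \emph{net} exists. A $3k$-fold point cannot lie in the base locus of a net, so all lines through each vertex lie in one class. Hence the standard partition is the only $3$-net, and checking net axioms with triple base points cannot succeed. What you need are reduced multinets that are not nets, with the vertices in the base locus and $n_X=k$ there.

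These come from the idea you mentioned and then dropped. Pull back the three non-standard nets of Example~\ref{ex:ceva-nets} along $[x:y:z]\mapsto[x^k:y^k:z^k]$. Concretely, sort the lines $x=\zeta^a y$, $y=\zeta^b z$, $x=\zeta^c z$ by $(a,b,c)\bmod 3$, as prescribed by a Ceva net. The generic fiber stays connected: a generic member of the corresponding cubic pencil meets each coordinate line transversally at a point off the other two. Hence the $(\Z_k)^2$ Kummer cover restricts to a connected cover of it.

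The paper instead obtains four multinets from $\beta_3(\A)=2$ and Theorem~\ref{thm:ps-nets}\eqref{ps4}. Note that the multiplicity hypothesis of that theorem fails for $k>1$, consistent with the three extra objects being multinets rather than nets.

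Finally, for $k=1$ the Ceva arrangement carries four $3$-nets, not one as you claim.
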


The proof of Theorem~\ref{thm:general} proceeds as follows.
The Milnor fiber $F$ is a regular $\Z_n$-cover of the
projectivized complement $U=U(\A)$, classified by the
\emph{diagonal character} $\rho_n\colon\pi_1(U)\to\Z_n$,
which sends each meridional generator $\bar\gamma_H$ to $1$.
Embedding $\Z_n\inj \C^*$ via $1\mapsto e^{2\pi \ii/n}$, we
view $\rho_n$ as a rank-one complex character; its position
relative to the characteristic varieties of $U$ controls
the Betti numbers of $F$ and the characteristic polynomial
of the algebraic monodromy.

The argument combines four ingredients. The first and key
new one is the identification of the \emph{reduced diagonal
character} $\bar\rho_n=\rho_n^{n/3}$ as the torsion point linking the
components of $\VV^1_1(U)$ associated to the two reduced
$3$-multinets to the Milnor fiber cover
(Lemma~\ref{lem:rho-on-torus}). The remaining three are:
the pencils associated to the multinets and their lifts to
the Milnor fiber, following Zuber \cite{Zu}; the depth
increase at intersections of characteristic variety components
\cite{ACM}; and a mixed Hodge structure argument exploiting
the weight filtration on the monodromy eigenspaces.

The remainder of this paper is organized as follows.
\begin{itemize}[itemsep=2pt, leftmargin=2.6em, labelsep=0.5em]
\item[\S\ref{sec:cjl}] reviews cohomology jump loci ---
characteristic and resonance varieties and the Tangent Cone
Theorem --- together with the notions of $1$-formality and
partial formality that underlie our argument.
\item[\S\ref{sec:arrangements}] covers the necessary background
on hyperplane arrangements: complements, intersection lattices,
multinets and pencils.
\item[\S\ref{sect:mf-arr}] discusses the Milnor fibration, the
mixed Hodge structure on $F(\A)$, the formula for the characteristic 
polynomial of the algebraic monodromy, 
as well as the Aomoto--Betti numbers $\beta_p(\A)$ 
and their relation to essential components of $\VV^1_1(U)$.
\item[\S\ref{sect:reflection-arrs}] specializes to finite
reflection groups, with emphasis on the monomial arrangements
$\A(m,m,3)$ and the Hessian arrangement $\A(G_{25})$.
\item[\S\ref{sect:main}] contains the proofs of
Theorems~\ref{thm:general} and~\ref{thm:monomial}, 
together with a discussion of the role of weight purity and the
limits of the pincer argument.
\item[\S\ref{sect:questions}] gives a $\beta_3$ criterion for 
non-$1$-formality of $F(\A)$ and closes with several open
questions, including whether non-$1$-formality can occur for
arrangements with $\beta_3(\A)\le 1$, whether the Milnor fibers
of the braid arrangements $\A_n$ are formal, whether the reduced
multinet hypothesis in Theorem~\ref{thm:general} can be further
relaxed, and whether the Milnor fiber of $\A(G_{25})$ --- 
conjecturally the only arrangement supporting a
non-trivial $4$-net --- is $1$-formal.
\end{itemize}

\section{Cohomology jump loci and formality}
\label{sec:cjl}

\subsection{Characteristic and resonance varieties}
\label{subsec:cjl-def}

Let $X$ be a connected CW-complex of finite type, with
fundamental group $G=\pi_1(X)$ and abelianization
$G_{\ab}=H_1(X;\Z)$. The \emph{character group} of $X$
is $\Ch(X)=\Hom(G_{\ab},\C^*)$; this is an abelian, complex algebraic
group whose tangent space at the identity $\bo$ we identify with
$H^1(X;\C)$.

The \emph{characteristic varieties} of $X$ are the jump loci
for cohomology with rank-$1$ coefficients:
\[
  \VV^i_s(X)=\bigl\{\rho\in\Ch(X)\mid
  \dim H^i(X;\C_\rho)\ge s\bigr\},
\]
where $\C_\rho$ is $\C$ viewed as a $\C[G_{\ab}]$-module
via $g\cdot z=\rho(g)z$. The \emph{resonance varieties} of $X$
are the jump loci for the Aomoto complex: setting
$A=H^*(X;\C)$, for each $a\in A^1$ one forms the cochain
complex $(A,\delta_a)$ with differential $\delta_a(u)=a\cup u$,
and defines
\[
  \RR^i_s(X)=\bigl\{a\in H^1(X;\C)\mid
  \dim H^i(A,\delta_a)\ge s\bigr\}.
\]
Each $\RR^i_s(X)$ is a Zariski-closed, homogeneous subvariety
of $H^1(X;\C)$. The relationship between the two families of
jump loci is discussed in \S\ref{subsec:formality}. In degree $1$, 
both $\VV^1_s(X)$ and $\RR^1_s(X)$ depend only on $\pi_1(X)$. 

\subsection{Quasi-projective varieties}
\label{subsec:qp}

When $X$ is a smooth, quasi-projective variety, the cohomology
jump loci are severely constrained by the underlying algebraic
geometry. The definitive structural result, building on foundational
work of Arapura \cite{Ar} and Dimca--Papadima \cite{DP-ccm},
is the following.

\begin{theorem}[Budur--Wang {\cite{BW}}]
\label{thm:bw}
Let $X$ be a smooth quasi-projective variety. Then each
characteristic variety $\VV^i_s(X)$ is a finite union of
torsion-translated subtori of $\Ch(X)$.
\end{theorem}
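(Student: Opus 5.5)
The statement just above is the Budur--Wang structure theorem, cited from \cite{BW}. The plan below follows their strategy; I use only general facts about local systems and Hodge theory, not the arrangement material developed later.

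The first step is to reduce to a moduli-theoretic statement. I would identify $\Ch(X)$ with the moduli space $M_B(X,1)$ of rank-one local systems, the Betti moduli space. Under this identification $\VV^i_s(X)$ is the locus where the cohomology jumps. Fix a good compactification $\overline{X}$ with a simple normal crossing boundary $D$. By the Riemann--Hilbert correspondence, rank-one local systems correspond to rank-one flat logarithmic connections on $(\overline{X},D)$ whose residues lie in a fixed fundamental domain. The jump loci then have de Rham counterparts, and the comparison map is analytic, though not algebraic. This sets up the standard \emph{bi-algebraic} situation. On the de Rham side, and on the Dolbeault side obtained through the corresponding Higgs or logarithmic Higgs moduli, the jump loci are algebraic. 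By Simpson's $\C^*$-action argument they are also stable under the natural contraction.

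The second step is to show that every irreducible component of $\VV^i_s(X)$ is a translated subtorus. I would use the characterization due to Simpson in the projective case: a subvariety $V\subset\Ch(X)$ is a translated subtorus if $V$ is algebraic and its preimage under the exponential map $H^1(X;\C)\to\Ch(X)$ is, locally, a finite union of translated linear subspaces. Locally around any $\rho\in V$, I would compute $\VV^i_s$ by the Green--Lazarsfeld / Dimca--Papadima deformation theory, namely the Aomoto-type complexes twisted by $\rho$. In the quasi-projective setting this requires formality of the relevant dg-module, which I would get from Deligne's mixed Hodge theory on the twisted cohomology at unitary points. At those points the jump locus near $\rho$ is locally linear, i.e.\ given by a tangent cone that is a union of linear subspaces. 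I expect this step to be the main obstacle. At non-unitary points there is no Hodge theory, so linearity has to be propagated from unitary points by algebraicity together with the $\R_{>0}$-action. Budur--Wang's "absolute sets" formalism handles this: one shows that the jump loci are absolute $\overline{\mathbb{Q}}$-constructible sets, compatible with Galois conjugation. Here I would invoke Gelfond--Schneider / Ax--Lindemann type transcendence to conclude that an algebraic subvariety whose exponential preimage is algebraic must be a translated subtorus.

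The last step is torsion. Since $X$ is defined over a finitely generated field, the jump loci are stable under $\mathrm{Gal}(\overline{\mathbb{Q}}/\mathbb{Q})$, because they are defined over $\overline{\mathbb{Q}}$ and absolute. Each component is then a translated torus $\rho T$ that is invariant under Galois conjugation and also closed under the de Rham/Betti correspondence. A Laumon-type argument, or an argument via Simpson's theorem on points of Hodge type, forces the translation $\rho$ to be choosable of finite order. Finiteness of the number of components follows from noetherianity of $\Ch(X)$.
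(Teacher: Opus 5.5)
The paper gives no proof of this statement; it simply quotes \cite{BW}. Your outline therefore has to stand on its own as a reconstruction of Budur--Wang, and as written it has genuine gaps.

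The first gap is the local model in your second step. Mixed Hodge theory does \emph{not} make the relevant dg-module formal for a quasi-projective $X$, not even at the most basic unitary point $\bo$. Morgan's criterion needs $W_1H^1(X;\C)=0$. For example, the punctured cone $\{x^3+y^3+z^3=0\}\setminus\{0\}\subset\C^3$ is homotopy equivalent to a Heisenberg nilmanifold. There $\VV^1_1=\{\bo\}$, while the cup product vanishes on $H^1$, so $\RR^1_1=H^1\cong\C^2$. Likewise, the whole point of the present paper is that Milnor fibers $F(\A)$, which are smooth affine, need not be $1$-formal. So the germ of $\VV^i_s(X)$ at $\bo$ is \emph{not} computed by the Aomoto complexes of $H^*(X;\C)$.

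What replaces formality is the Dimca--Papadima theorem. Via $\exp$, this germ is identified with the germ at $0$ of the jump loci of Morgan's Gysin model, a finite-dimensional but in general non-formal cdga. The algebraicity of both sides of this exponential identification forces the components through $\bo$ to be subtori, and passing to finite covers handles torsion points.

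At non-torsion unitary points Deligne's theory does not apply at all. One would need Timmerscheidt's mixed Hodge complexes, and the formality problem above persists there. Your ``$\R_{>0}$-propagation'' presupposes that $\VV^i_s(X)$ is stable under contraction, which you have not established in the quasi-projective setting. A priori a component need not even contain a unitary point.

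Note that one good point per component suffices. If $V$ is an irreducible component and the germ of $\VV^i_s(X)$ at some $\rho\in V$ is a finite union of germs of translated subtori, then irreducibility and Zariski closure force $V$ to be one of them. So the real crux is to produce a torsion point on every component, and this is exactly where your third step fails.

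Galois stability of $\VV^i_s(X)\subset\Ch(X)$ is vacuous. These loci are cut out by Fitting ideals over $\Z[H_1(X;\Z)]$, so they are defined over $\Q$ whatever $X$ is. A Galois-stable translated torus need not be a torsion translate; think of $\{2\}\subset\C^*$. The ``absoluteness'' in Budur--Wang's later formalism refers to transporting sets through Riemann--Hilbert under conjugation of $X$ by field automorphisms, not to Galois stability inside $\Ch(X)$.

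What actually forces torsion is simultaneous algebraicity over $\overline{\Q}$ on the Betti and logarithmic de Rham sides:
\begin{enumerate}
\item Reduce to $X$ defined over $\overline{\Q}$ by spreading out and using generic topological triviality; a finitely generated field will not do.
\item Use Deligne's comparison between logarithmic de Rham cohomology and $H^*(X;L)$, for residues in a fundamental domain, to see that the de Rham counterpart of each component is also defined over $\overline{\Q}$.
\item Apply Simpson's transcendence argument, whose basic instance is Gelfond--Schneider: if $a$ and $e^{2\pi\ii a}$ are both algebraic, then $a\in\Q$.
\end{enumerate}

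You spend Gelfond--Schneider on the subtorus step instead. There only Ax--Lindemann, or the elementary fact that the Zariski closure of $\exp$ of an affine subspace is a translated subtorus, is relevant. Also, $\exp$-preimages are $H^1(X;\Z)$-periodic, hence algebraic in $H^1(X;\C)$ only in trivial cases, so ``bi-algebraic'' must refer to the de Rham structure.

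Your fallback via points of Hodge type cannot work either. In rank one these are exactly the unitary characters, which need not be torsion.
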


The non-translated subtori occurring in $\VV^1_s(X)$ are
accounted for by Arapura's theorem. Call a holomorphic map
$f\colon X\to\Sigma$ \emph{admissible} if it is surjective with
connected generic fiber and the target $\Sigma$ is a smooth
connected complex curve with $\chi(\Sigma)<0$. Up to
reparametrization at the target, $X$ admits only finitely many
admissible maps; let $\mathcal{E}_X$ denote the set of
equivalence classes.

\begin{theorem}[Arapura {\cite{Ar}}]
\label{thm:arapura}
The map $f\mapsto f^*(\Ch(\Sigma))$ establishes a bijection
between $\mathcal{E}_X$ and the set of positive-dimensional
irreducible components of $\VV^1_1(X)$ containing the identity.
\end{theorem}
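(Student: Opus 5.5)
Arapura's theorem is the last statement above, so the plan is for proving it; Theorem~\ref{thm:bw} is not needed. The correspondence has three parts: constructing the map, showing it is injective, and showing it is surjective.

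\emph{The map.} Let $f\colon X\to\Sigma$ be admissible. Since the generic fiber is connected, $f_\sharp\colon\pi_1(X)\to\pi_1(\Sigma)$ has finite-index image. Passing to a suitable orbifold structure on $\Sigma$, the map becomes surjective onto an orbifold fundamental group, so $f^*\colon\Ch(\Sigma)\to\Ch(X)$ is injective on the identity component. Its image $T_f=f^*(\Ch(\Sigma))^\circ$ is therefore a subtorus of dimension $b_1(\Sigma)\ge 1$. Since $\chi(\Sigma)<0$, for $\eta\ne\bo$ we have $\dim H^1(\Sigma;\C_\eta)=-\chi(\Sigma)\ge 1$. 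The low-degree five-term exact sequence of the Leray spectral sequence of $f$ shows that $H^1(\Sigma;\C_\eta)\to H^1(X;\C_{f^*\eta})$ is injective; this uses $f_*\C=\C$, which holds by connectedness of the generic fiber. Hence $T_f\subset\VV^1_1(X)$.

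\emph{$T_f$ is a full component, and injectivity.} To see that $T_f$ is a whole irreducible component, pass to tangent cones. By Deligne's mixed Hodge theory, $X$ is $1$-formal up to the weight-one correction, and the tangent cone to $\VV^1_1$ at $\bo$ equals $\RR^1_1$. The tangent space $f^*H^1(\Sigma;\C)$ is a maximal isotropic subspace of $\RR^1_1(X)$: it is isotropic for the cup product, with dimension at least $2$. The standard Catanese/Bauer--Siu argument then shows that any isotropic subspace $W\subset H^1(X)$ with $\dim W\ge 2$ lies in $f^*H^1(\Sigma)$ for a unique admissible $f$. Applying this to $T_f$ gives maximality. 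It also gives injectivity: two inequivalent admissible maps would yield distinct maximal isotropic subspaces.

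\emph{Surjectivity.} This is the main obstacle. Let $V\ni\bo$ be a positive-dimensional component of $\VV^1_1(X)$. First I would show that $V$ is a subtorus. Following Arapura, use Hodge theory on a good compactification $\overline X$: characters of $X$ correspond to flat line bundles with logarithmic connections. Cohomology jumping for $\rho$ near $\bo$ is governed by the vanishing of the cup products
\[
H^{1,0}\oplus H^{0,1}\to H^2,
\]
which in turn comes from a Green--Lazarsfeld-type deformation argument with logarithmic $1$-forms. The resonance component $\RR=T_\bo V$ is a linear subspace of dimension at least $2$ on which cup products vanish. By the Castelnuovo--de~Franchis lemma, in its logarithmic version due to Bauer and Catanese, a pair of independent wedge-zero logarithmic forms $\omega_1,\omega_2$ with $\omega_1\wedge\omega_2=0$ factors through a map to a curve. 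Stein factorization makes this map admissible, with $\chi(\Sigma)<0$ because $\dim\RR\ge 2$. The remaining case is $\dim\RR=1$, which arises when the weight-$2$ part of $H^1$ contributes. Here I would use $b_1(\Sigma)\ge 2$ for punctured curves, together with the exponential map, to identify $V=\exp(\RR)=T_f$. The delicate points are the mixed-weight bookkeeping and showing that the curve produced really has negative Euler characteristic. I expect to handle these by separating the holomorphic and antiholomorphic parts of $\RR$ and applying the de~Franchis argument to whichever part has dimension at least $2$.
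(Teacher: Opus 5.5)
The paper quotes this theorem from \cite{Ar} without proof, so your sketch has to stand on its own. It has a genuine gap at its base: two claims you rely on are false for smooth quasi-projective varieties in general.

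\textbf{The two false claims.} The first is that mixed Hodge theory yields $\TC_{\bo}(\VV^1_1(X))=\RR^1_1(X)$. That equality is what Theorem~\ref{thm:tcone} extracts from $1$-formality, and Morgan's theory gives $1$-formality when $W_1H^1(X;\C)=0$, but not in general. The second is that every isotropic subspace $W\subset H^1(X;\C)$ with $\dim W\ge 2$ lies in $f^*H^1(\Sigma;\C)$ for some admissible $f$.

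One example refutes both. Let $X$ be the complement of the zero section in a degree-one line bundle over an elliptic curve. Then $\pi_1(X)$ is the Heisenberg group, and $H^1(X;\C)=\C^2$ is isotropic by the Gysin sequence, so $\RR^1_1(X)=\C^2$. Yet $\VV^1_1(X)=\{\bo\}$. Moreover, $X$ has no admissible maps at all, since a nilpotent group cannot surject onto a free group of rank $\ge 2$ or a hyperbolic surface group.

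Your first claim would even contradict Theorem~\ref{thm:general}. Its proof uses $1$-formality of $F$ only through the equality $\TC_{\bo}(\VV^1_1(F))=\RR^1_1(F)$ (cf.\ Figure~\ref{fig:pincer}).

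The Castelnuovo--de~Franchis lemma, even in Bauer's logarithmic form, does not rescue the second claim. It applies only to subspaces of $H^0(\overline{X},\Omega^1_{\overline{X}}(\log D))$ whose wedge products vanish as forms, for a good compactification $\overline{X}$ with boundary $D$. Classes of type $(0,1)$ in $W_1H^1(X;\C)$, and cup products vanishing merely in cohomology, are out of its reach.

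Moreover, the tangent spaces $f^*H^1(\Sigma;\C)$ need not be isotropic. Take $X=\Sigma$ a compact curve of genus $\ge 2$ and $f=\mathrm{id}$. The component is all of $\Ch(X)$, and the cup product on $H^1(X;\C)$ is the nondegenerate intersection form. So ``maximal isotropic'' cannot characterize the components, and your maximality and injectivity arguments collapse.

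\textbf{Surjectivity.} You rightly call this the main obstacle, but it is not really addressed, for three reasons.
\begin{enumerate}
\item You need $V$ to be a subtorus but give no argument for it, having declared Theorem~\ref{thm:bw} unnecessary.
\item You assert that $T_{\bo}V$ is isotropic. This is unjustified and, as just seen, false for compact targets; any isotropy property of these tangent spaces is a \emph{consequence} of Arapura's theorem, not an input.
\item Your case $\dim\RR=1$ is incoherent. Since $\dim f^*\Ch(\Sigma)=b_1(\Sigma)\ge 2$, what must be shown there is that no such component exists, not that $V=T_f$.
\end{enumerate}

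The missing idea is that untwisted data at $\bo$ cannot produce the pencil; the Heisenberg example shows this again. One has to work at a general point $\rho\ne\bo$ of $V$, using the Hodge theory of rank-one local systems via Deligne's extension. The mechanism I would expect, modelled on Beauville's treatment of the compact case, runs as follows.
\begin{itemize}
\item Green--Lazarsfeld deformation theory along a tangent direction of $V$ produces a logarithmic $1$-form $\omega$ and a nonzero twisted logarithmic $1$-form $\alpha$, with coefficients in a line bundle attached to $\rho$, such that $\omega\wedge\alpha=0$.
\item A twisted Castelnuovo--de~Franchis lemma then yields a fibration $f\colon X\to\Sigma$ with $\chi(\Sigma)<0$ from which $\rho$ is pulled back.
\item Irreducibility of $V\ni\bo$ and finiteness of $\mathcal{E}_X$ give $V=f^*\Ch(\Sigma)$.
\end{itemize}

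Once surjectivity is known, maximality and injectivity follow from a standard fact: $f^*\Ch(\Sigma)\subseteq g^*\Ch(\Sigma')$ forces $f$ to factor through $g$. Hence $f\sim g$, because $f$ is surjective with connected generic fiber.

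\textbf{A minor point.} Connectedness of the generic fiber already makes $f_\sharp\colon\pi_1(X)\to\pi_1(\Sigma)$ surjective. So injectivity of $H^1(\Sigma;\C_\eta)\to H^1(X;\C_{f^*\eta})$ follows from inflation--restriction. You need neither the orbifold detour nor the Leray argument, which would require care for non-proper $f$.
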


The positive-dimensional components of $\VV^1_1(X)$ that do
\emph{not} pass through the identity are described similarly,
by replacing admissible maps with orbifold fibrations $f\colon
X\to(\Sigma,\mathbf{m})$ to curves with multiple fibers; see
\cite{ACM,Su-imrn} for details.

The depth theorem of Artal Bartolo, Cogolludo, and Matei
\cite{ACM} controls how torsion characters interact with
higher-depth characteristic varieties.

\begin{theorem}[{\cite[Prop.~6.9]{ACM}}]
\label{thm:acm}
Let $X$ be a smooth quasi-projective variety. Suppose $T_1$
and $T_2$ are distinct positive-dimensional irreducible
components of $\VV^1_r(X)$ and $\VV^1_s(X)$, respectively.
If $\xi\in T_1\cap T_2$ is a torsion character, then
$\xi\in\VV^1_{r+s}(X)$.
\end{theorem}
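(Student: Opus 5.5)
The plan is to realize both components geometrically, via the orbifold version of Theorem~\ref{thm:arapura}, and then show that the two resulting pieces of twisted cohomology at $\xi$ are independent. By Theorem~\ref{thm:bw} and the orbifold extension of Arapura's theorem recalled after Theorem~\ref{thm:arapura}, each $T_i$ has the form $T_i=\xi_i\cdot f_i^*\Ch(\Sigma_i,\mathbf m_i)$. Here $f_i\colon X\to(\Sigma_i,\mathbf m_i)$ is an orbifold fibration onto a curve and $\xi_i$ is a torsion character. Since $T_1\neq T_2$ are irreducible components, $f_1$ and $f_2$ are inequivalent. In particular, $f_2$ is non-constant on a generic fiber of $f_1$.

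\textbf{Step 1: the two subspaces.} Write $\xi\in T_1\cap T_2$. Then $\C_\xi$ is, up to the translation factor, pulled back from an orbifold local system $L_i$ on $(\Sigma_i,\mathbf m_i)$. The standard computation in \cite{ACM} (via the Leray spectral sequence of $f_i$) shows that
\[
  f_i^*\colon H^1(\Sigma_i^{\rm orb};L_i)\to H^1(X;\C_\xi)
\]
is injective. Its image $W_i$ has dimension at least $r$ for $i=1$ and at least $s$ for $i=2$, because $T_i$ is a component of the corresponding depth locus and depth is computed generically on $T_i$ by the curve. It therefore suffices to prove $W_1\cap W_2=0$, which gives $\dim H^1(X;\C_\xi)\ge r+s$.

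\textbf{Step 2: reduce to untwisted cohomology.} Since $\xi$ is torsion of some order $N$, pass to the finite abelian cover $p\colon X_\xi\to X$ on which $\xi$ becomes trivial. Then $H^1(X;\C_\xi)$ is the $\xi$-isotypic summand of $H^1(X_\xi;\C)$ under the deck group. The maps $f_i$ lift, after Stein factorization, to genuine fibrations $\tilde f_i\colon X_\xi\to\tilde\Sigma_i$ onto finite (orbifold) covers of $\Sigma_i$. Under this identification, $W_i$ sits inside $\tilde f_i^*H^1(\tilde\Sigma_i;\C)$, so the problem becomes one about honest curves and constant coefficients.

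\textbf{Step 3: independence.} Suppose $\alpha=\tilde f_1^*a=\tilde f_2^*b\neq0$. Let $F$ be a generic fiber of $\tilde f_1$. Restricting to $F$ gives $\alpha|_F=0$, since $\alpha$ is pulled back from the base of $\tilde f_1$. On the other hand, $\tilde f_2|_F\colon F\to\tilde\Sigma_2$ is a non-constant map of curves, hence dominant. For dominant maps of smooth curves, pullback on $H^1$ is injective: the transfer, or equivalently the fact that a finite-degree map induces a finite-index image on $\pi_1$, shows that $(\tilde f_2|_F)^*$ has trivial kernel. Hence $b=0$, so $\alpha=0$, a contradiction.

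The main obstacle is making Steps 1 and 2 precise in the orbifold setting. One must check that the depth of $T_i$ at the special point $\xi$ is still at least $r$ (resp.\ $s$) through the curve. One must also check that the lifted fibrations on $X_\xi$ remain dominant on each other's fibers, and that multiple fibers do not destroy injectivity. I would handle this first by working entirely on $X_\xi$, where everything is untwisted. The needed depth bound then follows from semicontinuity of $\dim H^1(\tilde\Sigma_i;\C)$-isotypic pieces, which is where the torsion hypothesis on $\xi$ is essential.
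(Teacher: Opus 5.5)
The paper does not prove this statement; it quotes it from \cite[Prop.~6.9]{ACM}. So your outline is a self-contained substitute for a citation, not a variant of an in-paper argument.

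\textbf{What works.} Its core is Step~3: a class pulled back along $f_2$ must vanish on a generic fiber of $f_1$, while a dominant map to a curve is injective on $H^1$. That step is correct. It is the mechanism behind the fact, used later in the paper, that distinct components of $\VV^1_1$ through $\bo$ meet in finitely many points \cite[Lem.~6.12]{DPS-duke}. Compared with the bare citation, your route buys an explicit direct sum $W_1\oplus W_2\subseteq H^1(X;\C_\xi)$, which shows where $r+s$ comes from.

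\textbf{What it leans on.} The cost is reliance on the ACM structure theory, and you should state exactly what is imported.
\begin{enumerate}
\item You need $\xi$ itself, not $\xi$ ``up to a translation factor,'' to equal $f_i^*L_i$ for a character $L_i$ of $\pi_1^{\mathrm{orb}}(\Sigma_i)$. Otherwise $f_i^*$ does not land in $H^1(X;\C_\xi)$. This holds because a positive-dimensional component must be trivial on the image of $\pi_1$ of a generic fiber, so it factors through $\pi_1^{\mathrm{orb}}$ with gcd multiplicities.
\item For $r\ge 2$, you need a positive-dimensional component of $\VV^1_r$ to be a whole $f_i^*(C_i)$, with $C_i$ a connected component of $\Ch(\pi_1^{\mathrm{orb}}(\Sigma_i))$. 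You also need the generic depth on it to equal $\dim H^1(\pi_1^{\mathrm{orb}}(\Sigma_i);L)$. Both come from the Leray computation: the other term, $H^0(\Sigma_i^0;L\otimes R^1f_{i*}\C)$, is nonzero for only finitely many $L$.
\item This same fact is what justifies $f_1\ne f_2$, since two components coming from one pencil are either equal or disjoint.
\end{enumerate}

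\textbf{Corrections.}
\begin{enumerate}
\item Torsion is not essential where you place it. The function $L\mapsto\dim H^1(\pi_1^{\mathrm{orb}}(\Sigma_i);L)$ is constant on each connected component of the orbifold character group away from $\bo$, and larger at $\bo$. So the bound at $L_i$ needs no torsion.
\item Step~2 can be dropped. $\C_\xi$ is trivial on a generic fiber $F$ of $f_1$. The image of $\pi_1(F)$ in $\pi_1^{\mathrm{orb}}(\Sigma_2)$ has finite index, and restriction to a finite-index subgroup is injective on $H^1$ over $\C$. So Step~3 runs directly in twisted cohomology. Consistently with this, once $f_1\ne f_2$ the set $T_1\cap T_2$ is finite, and so consists of torsion points anyway.
\item $F$ is a curve only when $\dim X=2$. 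In general, use that a dominant morphism from a smooth connected variety to a curve has finite-index image on $\pi_1$.
\end{enumerate}
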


\subsection{Formality and the Tangent Cone Theorem}
\label{subsec:formality}

A connected CW-complex $X$ of finite type is \emph{formal}
(over $\Q$) in the sense of Sullivan \cite{Sullivan77} if its
algebra of PL differential forms $(\apl(X),d)$ is connected by
a zig-zag of quasi-isomorphisms of cdgas to $(H^*(X;\Q),0)$.
More generally, $X$ is \emph{$q$-formal} if the zig-zag is only
required to induce isomorphisms in cohomology through degree $q$
and a monomorphism in degree $q+1$. Clearly, formal implies
$q$-formal for all $q$, and $1$-formality of $X$ depends only on
$\pi_1(X)$. For surveys of the formality notions needed for this paper,
we refer to \cite{PS-formal, Su-ems}.

The following result shows that for spaces of low topological
complexity, $q$-formality already implies full formality.

\begin{theorem}[M\u{a}cinic {\cite{Mc10}}]
\label{thm:partial-to-formal}
Let $X$ be a space with $H^i(X;\Q)=0$ for all $i\ge q+2$.
Then $X$ is $q$-formal if and only if $X$ is formal.
\end{theorem}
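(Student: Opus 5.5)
The statement is Măcinic's theorem: for a space $X$ with $H^i(X;\Q)=0$ for $i\ge q+2$, $q$-formality is equivalent to formality. The plan has three steps. First, the direction ``formal $\Rightarrow$ $q$-formal'' is immediate from the definitions. Second, for the converse I would work with minimal models rather than arbitrary zig-zags. Let $\mathcal{M}=(\bigwedge V,d)$ be the Sullivan minimal model of $\apl(X)$, filtered by degree: $\mathcal{M}(i)$ is generated by $V^{\le i}$. I would use the standard reformulation (Sullivan, Deligne--Griffiths--Morgan--Sullivan) that $q$-formality is equivalent to the existence of a cdga map $\phi\colon \mathcal{M}(q)\to (H^*(X;\Q),0)$ inducing an isomorphism in $H^{\le q}$ and a monomorphism in $H^{q+1}$.

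Third, I would extend $\phi$ inductively to all of $\mathcal{M}$, degree by degree, so that the extension is a quasi-isomorphism. The target $H=H^*(X;\Q)$ vanishes in degrees $\ge q+2$, so every generator $v\in V^{j}$ with $j\ge q+2$ must map to $0$. For this to be compatible with the differential, we need $\phi(dv)=0$. Since $dv$ has degree $j+1\ge q+3$, this holds automatically, because $H^{j+1}=0$. The substantive work is therefore in degree $q+1$. The generators $V^{q+1}$ are added to kill the kernel of $H^{q+2}(\mathcal{M}(q))\to H^{q+2}(X)=0$, and to create the missing classes in $H^{q+1}$.

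At the generator level, I would choose $V^{q+1}=V'\oplus V''$ as follows. The differential $d$ maps $V'$ isomorphically onto a complement of the cocycles representing the kernel in degree $q+2$. The piece $V''$ consists of new cocycles that surject onto the cokernel of $H^{q+1}(\mathcal{M}(q))\to H^{q+1}(X)$; since $\phi$ is injective in $H^{q+1}$, that cokernel is a well-defined quotient. Define $\phi$ on $V''$ by lifting a chosen basis of the cokernel to $H^{q+1}$. On $V'$, set $\phi=0$. This is legitimate because $\phi(dv')$ lies in $H^{q+2}=0$.

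The main obstacle is checking that the resulting map $\mathcal{M}(q+1)\to H$ remains injective, hence bijective, in $H^{q+1}$. This requires an argument that the new cocycles do not create spurious classes. I would handle this by a spectral-sequence or five-lemma comparison: compare $\mathcal{M}(q)\subset\mathcal{M}(q+1)$ with the corresponding inclusion in the actual minimal model $\mathcal{M}\to\apl(X)$, and use that $\mathcal{M}(q+1)\to\apl(X)$ is $(q+1)$-connected, meaning an isomorphism on $H^{\le q+1}$ and injective on $H^{q+2}$.

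Combining the steps, the map $\mathcal{M}\to H$ is an isomorphism on $H^{\le q+1}$, and both sides vanish above degree $q+1$ in cohomology. Indeed, $\mathcal{M}$ is quasi-isomorphic to $\apl(X)$, whose cohomology vanishes in degrees $\ge q+2$. So $\mathcal{M}\to (H,0)$ is a quasi-isomorphism, and $X$ is formal.
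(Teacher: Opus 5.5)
The paper gives no proof of this statement; it is quoted from M\u{a}cinic. Your architecture is the standard one: replace $q$-formality by a map $\phi\colon\mathcal{M}(q)\to(H^*(X;\Q),0)$, extend it over the remaining generators, and note that every compatibility condition $\phi(dv)=0$ is automatic because $H^{\ge q+2}(X;\Q)=0$.

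The gap is in degree $q+1$, the one step with real content. Two different injections $H^{q+1}(\mathcal{M}(q))\to H^{q+1}(X;\Q)$ are in play:
\begin{itemize}
\item the one induced by the model map $\mathcal{M}(q)\to\apl(X)$, whose cokernel your new cocycle generators $V''$ account for;
\item the map $\phi_*$.
\end{itemize}
Their images are in general different subspaces, and your prescription to ``lift a basis of the cokernel'' conflates them. A complement of the first image can meet $\operatorname{im}\phi_*$ nontrivially, and then surjectivity on $H^{q+1}$ fails.

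Nor can this ``main obstacle'' be settled by a five-lemma or spectral-sequence comparison with $\mathcal{M}\to\apl(X)$. That would need a map between $\apl(X)$ and $(H^*(X;\Q),0)$ compatible with the two maps out of $\mathcal{M}(q)$, which is essentially the formality you are trying to prove. Separately, your splitting $V^{q+1}=V'\oplus V''$ presumes $dV'\subset\mathcal{M}(q)$. When $V^1\ne 0$ (as for Milnor fibers with $q=1$), $dV^{q+1}$ can involve products $V^1\cdot V^{q+1}$, so this is not available in general.

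The fix is direct linear algebra.
\begin{itemize}
\item Since $\mathcal{M}^q=\mathcal{M}(q)^q$ and $\mathcal{M}^{q+1}=\mathcal{M}(q)^{q+1}\oplus V^{q+1}$, we have $B^{q+1}(\mathcal{M})=B^{q+1}(\mathcal{M}(q))$.
\item Let $V_0=\{v\in V^{q+1}: dv\in d(\mathcal{M}(q)^{q+1})\}$. Pick a basis $\{v_i\}$ of $V_0$, elements $a_i\in\mathcal{M}(q)^{q+1}$ with $d(v_i+a_i)=0$, and a complement $V'$ of $V_0$.
\item Then $Z^{q+1}(\mathcal{M})=Z^{q+1}(\mathcal{M}(q))\oplus\spn\{v_i+a_i\}$, so $H^{q+1}(\mathcal{M})=H^{q+1}(\mathcal{M}(q))\oplus\spn\{[v_i+a_i]\}$.
\item Since $\mathcal{M}\simeq\apl(X)$ and $b_{q+1}(X)<\infty$, the number of $v_i$ equals $b_{q+1}(X)-\dim H^{q+1}(\mathcal{M}(q))=\dim\operatorname{coker}\phi_*$. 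This dimension count is the only place the model map enters.
\item Choose $c_i\in H^{q+1}(X;\Q)$ lifting a basis of $\operatorname{coker}\phi_*$. Set $\Phi(v_i)=c_i-\phi(a_i)$, $\Phi(V')=0$ and $\Phi(V^{\ge q+2})=0$.
\end{itemize}
Then $\Phi_*$ restricts to $\phi_*$ on $H^{q+1}(\mathcal{M}(q))$ and sends $[v_i+a_i]\mapsto c_i$, so it is bijective on $H^{q+1}$. The rest of your argument then goes through.
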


The interaction between formality and cohomology jump loci is
governed by two types of tangent cone associated to a subvariety
$W$ of the character torus $\Ch(X)\cong H^1(X;\C^*)$. The first is
the classical \emph{tangent cone} $\TC_{\bo}(W)$ at the identity,
the homogeneous variety defined by the initial ideal of the local
ring of $W$ at ${\bo}$. The second is the \emph{exponential tangent
cone} \cite{DPS-duke}
\begin{equation}
\label{eq:tau1}
  \tau_{\bo}(W) = \bigl\{z\in H^1(X;\C)\bigm|
  \exp(\lambda z)\in W\text{ for all }\lambda\in\C\bigr\}.
\end{equation}
Thus $\tau_{\bo}(W)$ consists of those directions along which 
the entire exponential line remains inside $W$.
As shown in \cite{DPS-duke}, $\tau_{\bo}(W)$ is always a finite union
of rationally defined linear subspaces of $H^1(X;\C)$, and the
inclusion $\tau_{\bo}(W)\subseteq\TC_{\bo}(W)$ 
holds for any $W$, but is in general strict. Applied to the
characteristic varieties, and combined with the tangent cone
inclusion of Libgober \cite{Li02}, one obtains for every $i,s$
the chain
\begin{equation}
\label{eq:tc-chain}
  \tau_{\bo}\bigl(\VV^i_s(X)\bigr)\subseteq
  \TC_{\bo}\bigl(\VV^i_s(X)\bigr)\subseteq\RR^i_s(X),
\end{equation}
where each inclusion may be strict in general.

The Tangent Cone Theorem asserts that formality forces all
inclusions in \eqref{eq:tc-chain} to be equalities.

\begin{theorem}[Tangent Cone Theorem {\cite{DPS-duke, DP-ccm}}]
\label{thm:tcone}
If $X$ is $q$-formal, then for all $i\le q$ and $s\ge 1$,
\[
  \tau_{\bo}\bigl(\VV^i_s(X)\bigr)=
  \TC_{\bo}\bigl(\VV^i_s(X)\bigr)=\RR^i_s(X).
\]
In particular, all irreducible components of $\RR^i_s(X)$ are
rationally defined linear subspaces of $H^1(X;\C)$, and the
irreducible components of $\VV^i_s(X)$ through the identity are
algebraic subtori of the form $\exp(L)$, where $L$ ranges over
the components of $\RR^i_s(X)$.
\end{theorem}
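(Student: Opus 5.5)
The plan is to prove the theorem in three stages: a local comparison of germs, then passage to tangent cones, then a globalization step that brings in the exponential tangent cone $\tau_{\bo}$.

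\emph{Step 1 (local comparison).} The idea is that $q$-formality lets us compute twisted cohomology near the trivial character from the cohomology algebra alone. For $z\in H^1(X;\C)$ near $0$, consider the rank-one local system $\C_{\exp(z)}$. Its cohomology is computed by the twisted de Rham-type complex $(\apl(X)\otimes\C,\, d+z\wedge\cdot)$, with $z$ a closed $1$-form representative. We then transport this complex, as a family over the analytic germ $(H^1(X;\C),0)$, along the zig-zag of $q$-quasi-isomorphisms to $(H^*(X;\C),0)$. There the family becomes the Aomoto complex $(A,\delta_z)$.

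The key technical input is that a $q$-quasi-isomorphism of cdgas (isomorphism on $H^{\le q}$, mono on $H^{q+1}$) induces isomorphisms on the cohomology of these twisted families in degrees $\le q$, uniformly for $z$ in a neighborhood of $0$. I would prove this by a spectral sequence or perturbation argument on the filtration by powers of the maximal ideal of $\mathcal{O}_{H^1,0}$, in the spirit of Goldman--Millson and of \cite{DP-ccm}. The conclusion is an isomorphism of analytic germs
\[
\exp\colon \bigl(\RR^i_s(X),0\bigr)\isom \bigl(\VV^i_s(X),\bo\bigr),\qquad i\le q,\ s\ge 1.
\]
I expect this step to be the main obstacle. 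The delicate point is the truncation: one needs the monomorphism on $H^{q+1}$ precisely to control the degree-$q$ cohomology, since that cohomology depends on the kernel of $\delta_z$ on degree-$q$ cochains, which the map on $H^{q+1}$ must detect.

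\emph{Step 2 (tangent cones).} Since $\exp$ is a local biholomorphism with differential the identity, the germ isomorphism identifies $\TC_{\bo}(\VV^i_s(X))$ with $\TC_0(\RR^i_s(X))$. Because $\RR^i_s(X)$ is homogeneous, the latter is $\RR^i_s(X)$ itself. This gives $\TC_{\bo}(\VV^i_s(X))=\RR^i_s(X)$.

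\emph{Step 3 (exponential tangent cone and linearity).} By \eqref{eq:tc-chain} it remains to show $\RR^i_s(X)\subseteq\tau_{\bo}(\VV^i_s(X))$.

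1. Take $x\in\RR^i_s(X)$. By homogeneity $\lambda x\in\RR^i_s(X)$ for all $\lambda$, so by Step 1, $\exp(\lambda x)\in\VV^i_s(X)$ for all $|\lambda|$ small.
2. The map $\lambda\mapsto\exp(\lambda x)$ is an entire map $\C\to\Ch(X)$, and $\VV^i_s(X)$ is Zariski closed.
3. The preimage of $\VV^i_s(X)$ under this map is therefore an analytic subset of $\C$ containing an open disk, hence all of $\C$. Thus $x\in\tau_{\bo}(\VV^i_s(X))$, and the three sets coincide.

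Since \cite{DPS-duke} shows $\tau_{\bo}$ of any subvariety is a finite union of rationally defined linear subspaces, every irreducible component $L$ of $\RR^i_s(X)$ is such a subspace. Finally, $\exp(L)$ is then an algebraic subtorus contained in $\VV^i_s(X)$. By the germ isomorphism, these subtori exhaust the components of $\VV^i_s(X)$ passing through $\bo$: any such component has germ at $\bo$ equal to a union of germs $\exp(L,0)$, and irreducibility together with equal dimension forces it to coincide with one $\exp(L)$.
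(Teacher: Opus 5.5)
The paper gives no proof of this statement; it quotes it from \cite{DPS-duke} (for $i=1$) and \cite{DP-ccm} (for $i\le q$). Your three-stage outline follows the architecture used there, and Steps~2 and~3 are sound. Homogeneity of $\RR^i_s(X)$ together with $d\exp_0=\mathrm{id}$ gives the tangent-cone equality. The identity-theorem argument along $\lambda\mapsto\exp(\lambda x)$, combined with the structure theorem for $\tau_{\bo}$, is the standard argument for the exponential tangent cone and for rationality. One small fix at the very end: the germ of a component $V\ni\bo$ is only \emph{contained} in $\bigcup_L\exp(L)$, not equal to a union of such germs. Take a top-dimensional analytic branch of $(V,\bo)$; it lies in some $\exp(L)$, so $V\subseteq\exp(L)\subseteq\VV^i_s(X)$, and maximality of $V$ finishes. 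The genuine gap is in Step~1, which carries all of the content.

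First, $(\apl(X)\otimes\C,\,d+z)$ does \emph{not} compute $H^*(X;\C_{\exp z})$ for $z\ne0$. The twisted Poincar\'e lemma fails for polynomial forms, because the gauge factor $e^{-f}$ is not polynomial. For example, triangulate $S^1$ by three edges and let $\omega$ be $z\,dt$ on one edge and $0$ on the others. For every $z\ne0$ the twisted complex then has $H^0=0$ and $H^1=\C$, whereas $H^*(S^1;\C_\lambda)=0$ for $\lambda\ne1$. To repair this, use $C^\infty$ forms, or work over the completed local ring $\hat{\mathcal O}$ of $H^1(X;\C)$ at $0$. There the gauge factor exists as a formal power series, and you can compare with the $\mathfrak m$-adically completed equivariant cochains of the universal abelian cover.

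Second, an $\mathfrak m$-adic filtration or perturbation argument only yields statements about the completed family, never ``uniformly for $z$ near $0$''. Moreover, a $q$-equivalence of $\hat{\mathcal O}$-complexes does not control fibers. With $t_1$ a coordinate, consider $0\to(\hat{\mathcal O}\xrightarrow{t_1}\hat{\mathcal O})$ with the target in degrees $q,q+1$. This map is bijective on $H^{\le q}$ and injective on $H^{q+1}$, yet the target's fiber at $0$ has $H^q=\C$.

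The missing idea is that each comparison map is a $q$-equivalence on the \emph{closed fiber} $z=0$; this is where injectivity on $H^{q+1}$ really enters. You must then propagate this to nearby characters. One way is to run the filtration argument after every base change $\hat{\mathcal O}\to\hat{\mathcal O}/\mathfrak p$. In addition, arrange that the two ends of the zig-zag are complexes of finite free $\hat{\mathcal O}$-modules in each degree: the completed cellular cochains of the universal abelian cover, and $H^*(X;\C)\otimes\hat{\mathcal O}$ with the Aomoto differential. Then fiber dimensions match at all points, and hence so do the reduced germs of $\VV^i_s(X)$ and $\RR^i_s(X)$ under $\exp$. Establishing this germ isomorphism is precisely the technical content of \cite{DP-ccm} (and, for $i=1$, of \cite{DPS-duke}); as written, your Step~1 asserts it rather than proves it.
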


The contrapositive gives the key non-formality test: if either
inclusion in \eqref{eq:tc-chain} is strict for some $i\le q$,
then $X$ is not $q$-formal. This is the mechanism we use 
later to detect non-$1$-formality of the Milnor fiber.

For smooth quasi-projective varieties, an important source of
formality comes from mixed Hodge theory. Every smooth
quasi-projective variety $X$ has the homotopy type of a finite
CW-complex \cite{Di92}, and its rational cohomology carries a
mixed Hodge structure with weight filtration $W_\bullet$. By
Morgan's theorem \cite{Morgan}, if $W_1H^1(X;\C)=0$ then $X$
is $1$-formal. More generally, Dupont \cite{Dp16} and
Chataur--Cirici \cite{CC} prove the \emph{purity implies
formality} principle: if $H^k(X;\Q)$ is pure of weight $2k$
for all $k\le q$, then $X$ is $q$-formal.

For $1$-formal spaces in general, the Tangent Cone Theorem
guarantees that the irreducible components of $\RR^1_1(X)$ are
rational linear subspaces, but they may intersect non-trivially:
this occurs already for right-angled Artin groups \cite{PS-mathann06}.
For smooth quasi-projective varieties, however, $1$-formality
implies the stronger conclusion that the components of
$\RR^1_1(X)$ are \emph{pairwise disjoint} outside the origin
\cite[Thm.~C]{DPS-duke}. This extra rigidity — a consequence of
the underlying algebraic geometry — plays an important role in
the proof of Theorem~\ref{thm:general}, where the
disjointness of resonance components of the Milnor fiber forces
a dimensional contradiction.

Finally, we record the following consequence of the Transfer
theorem for finite covers \cite[Cor.~2.5]{DP-pisa}, which
will be used several times below.

\begin{lemma}[{\cite{DP-pisa}}]
\label{lem:dp-formality}
Let $p\colon Y\to X$ be a finite regular cover with deck group
$H$. If $X$ is $q$-formal and $H$ acts trivially on
$H^i(Y;\Q)$ for all $i\le q$, then $Y$ is $q$-formal.
\end{lemma}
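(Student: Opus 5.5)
The statement to prove is Lemma~\ref{lem:dp-formality}: formality descends along a finite regular cover when the deck group acts trivially on low-degree cohomology. The natural route is through the Transfer theorem of \cite{DP-pisa}. For a finite regular cover $p\colon Y\to X$ with deck group $H$, that theorem says the following. Since $|H|$ is invertible in $\Q$, the averaging operator exhibits $\apl(X)$, up to quasi-isomorphism, as the $H$-invariant part $\apl(Y)^H$ of the cdga $\apl(Y)$, and this invariant part is a retract of $\apl(Y)$. Moreover, if $Y$ is $q$-formal then so is $X$.

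We need the converse direction, so the plan is to compare the minimal model (or the $q$-minimal model) of $Y$ with that of $X$. We proceed in three steps.

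\textbf{Step 1: cohomology through degree $q$.} Because $|H|$ is invertible in $\Q$, the transfer map identifies $H^i(X;\Q)$ with $H^i(Y;\Q)^H$. The hypothesis that $H$ acts trivially on $H^i(Y;\Q)$ for $i\le q$ then shows that
\[
p^*\colon H^i(X;\Q)\longrightarrow H^i(Y;\Q)
\]
is an isomorphism for $i\le q$. We also check degree $q+1$: there $H^{q+1}(Y;\Q)^H$ is a direct summand of $H^{q+1}(Y;\Q)$, so $p^*$ is injective in degree $q+1$.

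\textbf{Step 2: the cdga map is a $q$-equivalence.} Step 1 says precisely that the cdga map $p^*\colon\apl(X)\to\apl(Y)$ induces isomorphisms in cohomology up to degree $q$ and a monomorphism in degree $q+1$. It is therefore a $q$-quasi-isomorphism in the sense used to define $q$-formality. By the standard theory of Sullivan $q$-minimal models, it follows that $X$ and $Y$ have isomorphic $q$-minimal models.

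\textbf{Step 3: transferring the formality zig-zag.} Since $X$ is $q$-formal, its $q$-minimal model is also the $q$-minimal model of $(H^*(X;\Q),0)$. We then transfer along $p^*$ and compose the resulting zig-zags. The target cdga becomes $(H^*(X;\Q),0)$, which agrees with $(H^*(Y;\Q),0)$ through degree $q$ and injects into it in degree $q+1$. This yields a zig-zag from $\apl(Y)$ to its cohomology algebra satisfying the $q$-formality conditions, so $Y$ is $q$-formal.

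\textbf{Expected obstacle.} The delicate point is Step 3. Two cdgas that agree through degree $q$ with an injection in degree $q+1$ have the same $q$-minimal models, but only up to the truncation. One must verify that $q$-formality can be phrased purely in terms of the $q$-minimal model; this is the characterization in \cite{Mc10,DP-pisa}, and that is what makes the degree-$(q+1)$ monomorphism sufficient. I would invoke that characterization directly rather than build the zig-zag by hand.
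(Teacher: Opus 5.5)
Your argument is correct and follows the route the paper points to: the paper gives no proof of its own and cites \cite[Cor.~2.5]{DP-pisa} as a consequence of the Transfer theorem. Your proof is that argument. Transfer shows that $p^*$ maps $H^i(X;\Q)$ isomorphically onto $H^i(Y;\Q)^H$ in every degree, which gives isomorphisms for $i\le q$ under the trivial-action hypothesis and injectivity in degree $q+1$, so $p^*\colon\apl(X)\to\apl(Y)$ is a $q$-equivalence. The step you flag as delicate is actually immediate, and needs no characterization via $q$-minimal models: the chain $\apl(Y)\xleftarrow{p^*}\apl(X)\leftrightarrow\cdots\leftrightarrow(H^*(X;\Q),0)\xrightarrow{p^*}(H^*(Y;\Q),0)$ is already a zig-zag of $q$-equivalences.
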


\section{Hyperplane arrangements}
\label{sec:arrangements}

\subsection{Complements and intersection lattices}
\label{subsec:arr}

A \emph{central hyperplane arrangement} $\A$ in $\C^{\ell}$ is a
finite collection of codimension-$1$ linear subspaces
$H\subset\C^{\ell}$. We write $n=|\A|$ for the number of
hyperplanes, and $Q=Q(\A)=\prod_{H\in\A}f_H$ for a defining
polynomial, where each $f_H$ is a linear form with $\ker(f_H)=H$.
The \emph{complement} is a connected, smooth quasi-projective variety
\[
  M = M(\A) = \C^{\ell}\setminus\bigcup_{H\in\A}H.
\]
Since $Q$ is homogeneous, $\C^*$ acts freely on $M$ by scalar
multiplication, and the orbit space is the \emph{projectivized
complement} $U=U(\A)=\P(M)\subset\CP^{\ell-1}$. The Hopf bundle
restricts to a trivial fibration $M(\A)\cong U(\A)\times\C^*$.

The \emph{intersection lattice} $L(\A)$ is the poset of all
non-empty intersections of hyperplanes in $\A$, ordered by
reverse inclusion and ranked by codimension. We write $L_k(\A)$
for the set of rank-$k$ flats, and $\mu\colon L(\A)\to\Z$ for
the M\"obius function, defined by $\mu(\C^\ell)=1$ and
$\mu(X)=-\sum_{Y\supsetneq X}\mu(Y)$. The arrangement is
\emph{essential} if $\bigcap_{H\in\A}H=\{0\}$, or equivalently if
$r=\ell$, where $r=\rank(\A)$. In general, there is a decomposition
\[
  M(\A)\cong M(\A')\times \C^{\ell-r},
\]
where $\A'$ is an essential arrangement in $\C^r$ (the
\emph{essentialization} of $\A$). Accordingly, most homotopy-theoretic
properties of $M(\A)$ and of the Milnor fiber depend only on the
essentialization.

For each $k\ge 1$, the Betti numbers of $M(\A)$ are given by
the combinatorial formula $b_k(M)=\sum_{X\in L_k(\A)}|\mu(X)|$,
and all homology groups of $M(\A)$ are torsion-free \cite{OS80, OT}.
The multiplicity of a flat $X\in L_2(\A)$ is
$q_X=|\A_X|$, where $\A_X=\{H\in\A:H\supset X\}$. 
In the rank-$3$ case $\ell=3$, a flat $X\in L_2(\A)$ corresponds 
to a $q_X$-fold multiple point $\P(X)\in\P(\A)$.

The product decomposition $M\cong U\times\C^*$ induces a splitting
$\pi_1(M)\cong \pi_1(U)\times \Z$, where the central factor
$\Z=\pi_1(\C^*)$ is generated by the product of the meridional
loops $\gamma_H$ about the hyperplanes $H\in\A$. Let
$\overline{\gamma}_H=\pi_\sharp(\gamma_H)\in \pi_1(U)$ denote the
image of $\gamma_H$ under the projection
$\pi_\sharp\colon \pi_1(M)\to\pi_1(U)$. Passing to homology yields
$H_1(M;\Z)\cong \Z^n$ and $H_1(U;\Z)\cong \Z^{n-1}$, with basis
elements $x_H=[\gamma_H]$ and
$\overline{x}_H=[\overline{\gamma}_H]$, satisfying
$\sum_{H\in\A}\overline{x}_H=0$.

Since the fundamental groups of $M$ and $U$, as
well as their meridional bases  
are preserved under passage to a generic $3$-dimensional slice,
by the Lefschetz-type theorem of Hamm--L\^{e} \cite{HL},
for most purposes it suffices to assume that $\rank(\A)=3$,
i.e., that $\A$ is a central arrangement in $\C^3$ and
$\P(\A)$ is a line arrangement in $\CP^2$.

\subsection{The Orlik--Solomon algebra and formality}
\label{subsec:os}

The cohomology ring of $M(\A)$ was computed by Brieskorn
\cite{Br}, building on Arnol'd's calculation for the braid
arrangement, and given a purely combinatorial description by
Orlik--Solomon \cite{OS80}. Fix a linear order on $\A$, let
$E=E(\A)$ be the exterior algebra over $\Z$ with generators
$e_H$ in degree~$1$, and define a degree-$(-1)$ derivation
$\partial\colon E\to E$ by $\partial(1)=0$, $\partial(e_H)=1$,
extended by the graded Leibniz rule. Then
\[
  H^*(M(\A);\Z)\cong A(\A)\coloneqq E(\A)/I(\A),
\]
where $I(\A)$ is generated by $\partial(e_\B)$ for all
$\B\subset\A$ with $\codim\bigl(\bigcap_{H\in\B}H\bigr)<|\B|$.
The \emph{Orlik--Solomon algebra} $A(\A)$ is torsion-free and
depends only on $L(\A)$; in particular, all Betti numbers of
$M(\A)$ are combinatorial invariants.

The complement $M(\A)$ is \emph{formal} in the sense of rational
homotopy theory. This follows from the \emph{purity implies
formality} principle of Dupont \cite{Dp16} and
Chataur--Cirici \cite{CC}: the mixed Hodge structure on
$H^*(M(\A);\Q)$ is pure \cite{Shapiro}, and pure MHS implies
formality. Alternatively, for each $H\in\A$ the logarithmic
$1$-form $\omega_H=\tfrac{1}{2\pi \ii}\,d\log f_H$ restricts to a
closed form on $M$, and the subalgebra $\mathcal{D}\subset
\Omega^*_{\mathrm{dR}}(M)$ generated by these forms maps
isomorphically onto $H^*(M;\R)$; by Sullivan's formality
criterion \cite{Sullivan77}, this exhibits $M(\A)$ as formal.
In particular, all (rational) Massey products in
$H^*(M(\A);\Q)$ vanish, and the arrangement group
$G(\A)=\pi_1(M(\A))$ is $1$-formal. 

\subsection{Cohomology jump loci of arrangements}
\label{subsec:cv-arr}

We now describe the cohomology jump loci of the complement
$M=M(\A)$ and the projectivized complement $U=U(\A)$, and
the relationship between them. The resonance varieties 
$\RR^i_1(M(\A))$ were first defined and studied by Falk in \cite{Fa97}. 

Since $M(\A)\cong U(\A)\times\C^*$, the Künneth formula gives
$H^1(M;\C)=H^1(U;\C)\oplus H^1(\C^*;\C)$, with $b_1(M)=n$
and $b_1(U)=n-1$. The projection $\pi\colon M\to U$ and the
splitting $M\cong U\times\C^*$ interact with cohomology jump
loci as follows \cite[Prop.~3.3, Cor.~6.13]{Su-revroum}.

\begin{prop}
\label{prop:cjl-MU}
The map $\pi^*\colon H^1(U;\C)\inj H^1(M;\C)$ induces
isomorphisms
\[
  \RR^i_1(U)\isom\RR^i_1(M) \quad\text{and}\quad
  \VV^i_1(U)\isom\VV^i_1(M)
\]
for all $i\ge 1$. Moreover, $\RR^1_s(U)\cong\RR^1_s(M)$ 
and $\VV^1_s(U)\cong\VV^1_s(M)$ for all $1\le s<n$.
\end{prop}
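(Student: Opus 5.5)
The plan is to use the trivialization $M\cong U\times\C^*$ and compute jump loci of a product by the K\"unneth formula. Write $\Ch(M)=\Ch(U)\times\C^*$ and $H^1(M;\C)=H^1(U;\C)\oplus\C$. Under these identifications $\pi^*$ is the inclusion $\rho\mapsto(\rho,1)$ and $a\mapsto(a,0)$. For a rank-one local system $\C_{(\rho,t)}=\C_\rho\boxtimes\C_t$ on $U\times\C^*$ we have
\[
H^i(M;\C_{(\rho,t)})\cong \bigoplus_{j+k=i} H^j(U;\C_\rho)\otimes H^k(\C^*;\C_t).
\]
Since $H^*(\C^*;\C_t)=0$ for $t\neq 1$, and equals $\C$ in degrees $0$ and $1$ for $t=1$, the cohomology vanishes off the subtorus $\Ch(U)\times\{1\}$. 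On that subtorus it equals $H^i(U;\C_\rho)\oplus H^{i-1}(U;\C_\rho)$. The same splitting holds for Aomoto complexes: $A(M)=A(U)\otimes\Lambda(e)$ with $e$ in degree $1$, and $\delta_{(a,\lambda)}$ is acyclic in the $\Lambda(e)$ factor when $\lambda\ne0$. So $H^i(A(M),\delta_{(a,0)})\cong H^i(A(U),\delta_a)\oplus H^{i-1}(A(U),\delta_a)$.

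\textbf{Step 1 (the isomorphisms for $s=1$).} First I show that $\VV^i_1(M)\subseteq\Ch(U)\times\{1\}$. Then $(\rho,1)\in\VV^i_1(M)$ if and only if $H^i(U;\C_\rho)\ne0$ or $H^{i-1}(U;\C_\rho)\ne0$. So I must check $\VV^{i-1}_1(U)\subseteq\VV^i_1(U)$, apart from the trivial character, which lies in both sets whenever $b_i(U)\ne0$; this needs care only at the top degree. To get the inclusion I use that $U$ is a complement of an arrangement, which in the rank-3 reduction via Hamm--L\^e is a $2$-dimensional minimal complex with $\chi(U)=\chi(M)/0$. More precisely, $U\simeq$ the complement of an affine arrangement, and such complements are minimal CW-complexes. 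By the Euler characteristic identity $\sum(-1)^j\dim H^j(U;\C_\rho)=\chi(U)$ and the standard propagation property, a nontrivial $\rho$ with $H^{i-1}(U;\C_\rho)\ne0$ also has $H^i(U;\C_\rho)\ne0$. I will cite the propagation of jump loci for arrangement complements (Denham--Suciu--Yuzvinsky) rather than reprove it. The resonance statement is identical, with the Aomoto complex in place of local systems, using propagation of resonance for Orlik--Solomon algebras.

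\textbf{Step 2 (degree one, all $s<n$).} For $i=1$ the formula gives $\dim H^1(M;\C_{(\rho,1)})=\dim H^1(U;\C_\rho)+\dim H^0(U;\C_\rho)$. The term $\dim H^0(U;\C_\rho)$ is $1$ if $\rho=\bo$ and $0$ otherwise. Hence $\VV^1_s(M)=\pi^*\VV^1_s(U)\cup\{\bo\text{ if } s\le b_1(U)+1=n\}$. Since $\bo\in\VV^1_s(U)$ for $s\le n-1$, the extra point is absorbed, and we get equality for $1\le s<n$. Similarly $\RR^1_s(M)=\pi^*\RR^1_s(U)$ for $s<n$, because $0\in\RR^1_s(U)$ for $s\le n-1$. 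Note that $\VV^1_n(M)=\{\bo\}$ while $\VV^1_n(U)=\emptyset$, which explains the restriction on $s$.

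\textbf{Main obstacle.} The delicate point is the inclusion $\VV^{i-1}_1(U)\setminus\{\bo\}\subseteq\VV^i_1(U)$ in higher degrees, i.e.\ propagation. I would invoke it from the literature on arrangements, which is also what the cited reference \cite{Su-revroum} relies on.
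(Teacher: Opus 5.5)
Your K\"unneth set-up is correct, and Step~2 is a complete proof of the degree-one statement for $1\le s<n$. The gap is in Step~1, at the top degree you flag. For $i=r=\rank(\A)$ the inclusion $\VV^{r-1}_1(U)\subseteq\VV^{r}_1(U)$ that you need is false, so no extra care can supply it.

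Indeed, $U$ is the complement of the decone of $\A$, an affine arrangement of rank $r-1$. So $U$ has the homotopy type of an $(r-1)$-dimensional complex, and therefore $\VV^r_1(U)=\emptyset$ and $\RR^r_1(U)=\emptyset$. On the other hand $b_{r-1}(U)=b_r(M)\ne 0$, so $\bo\in\VV^{r-1}_1(U)$ and $0\in\RR^{r-1}_1(U)$. Your own formula then gives $\VV^r_1(M)=\pi^*\VV^{r-1}_1(U)\ne\emptyset$, and likewise $\RR^r_1(M)=\pi^*\RR^{r-1}_1(U)\ne\emptyset$.

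The simplest instance is the pencil of three lines, with $r=2$ and $U\simeq S^1\vee S^1$. Here $H^1(U;\C_\rho)\ne 0$ for every $\rho$, so $\VV^2_1(M)=\pi^*\Ch(U)\cong(\C^*)^2$, while $\VV^2_1(U)=\emptyset$. Your own description of $U$ in rank $3$ as a $2$-dimensional complex already shows $\VV^2_1(U)\not\subseteq\VV^3_1(U)=\emptyset$.

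Thus the isomorphism ``for all $i\ge 1$'' cannot be proved as stated, because it fails at $i=r$. What your argument does establish is
\[
\VV^i_1(M)=\pi^*\bigl(\VV^i_1(U)\cup\VV^{i-1}_1(U)\bigr),\qquad
\RR^i_1(M)=\pi^*\bigl(\RR^i_1(U)\cup\RR^{i-1}_1(U)\bigr)
\]
for all $i\ge 1$. This yields the claimed isomorphisms in the following ranges:
\begin{itemize}
\item for $i=1$, provided $n\ge 2$;
\item for $2\le i\le r-1$, using propagation for $U$ viewed as an abelian duality space of dimension $r-1$, not $r$ --- which is exactly why the chain stops before $i=r$;
\item trivially for $i>r$, where both sides are empty.
\end{itemize}
At $i=r$ you get $\pi^*\VV^{r-1}_1(U)$ instead of $\pi^*\VV^{r}_1(U)$.

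Two smaller points. First, the Hamm--L\^e reduction to rank $3$ only controls cohomology with local coefficients in degrees $\le 2$, so it cannot be used for jump loci in higher degrees. Second, since $\chi(M)=0$, the expression $\chi(U)=\chi(M)/0$ is meaningless, and the Euler characteristic identity alone yields no propagation; it is the Denham--Suciu--Yuzvinsky theorem that does the work.
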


Since $M$ is a smooth, quasi-projective variety, 
Theorem \ref{thm:bw} ensures that each characteristic variety 
$\VV^i_s(M)$ is a finite union of torsion-translated subtori of 
the character torus $H^1(M;\C^*)=(\C^*)^n$. Moreover, 
since $M$ is formal, Theorem \ref{thm:tcone} gives
$\TC_{\bo}(\VV^i_s(M))=\RR^i_s(M)$ for all $i,s$, and therefore 
each resonance variety $\RR^i_s(M)$ is a finite union of 
rationally defined linear subspaces of $H^1(M;\C)=\C^n$. 

As shown by Davis, Januszkiewicz, Leary, and Okun \cite{DJLO11},
every arrangement complement $M=M(\A)$ is a \emph{duality space}
in the sense of Bieri--Eckmann \cite{BE}. Motivated in part by
the quest for a topological explanation of propagation of
resonance varieties, Denham, Suciu, and Yuzvinsky \cite{DSY16,
DSY17} introduced the related notion of \emph{abelian duality
space} and proved that $M$ is an abelian duality space of
dimension $r=\rank(\A)$. Since $M$ is also formal, the Tangent
Cone Theorem transfers propagation of characteristic varieties
to resonance varieties:
\begin{equation}
\label{eq:propagation}
  \VV^1_1(M)\subseteq\cdots\subseteq\VV^{r}_1(M),\qquad
  \RR^1_1(M)\subseteq\cdots\subseteq\RR^{r}_1(M),
\end{equation}
and likewise $\VV^i_1(U)\subseteq\cdots\subseteq\VV^{r-1}_1(U)$
and $\RR^i_1(U)\subseteq\cdots\subseteq\RR^{r-1}_1(U)$; 
see \cite{Su-revroum} for a detailed account.

The variety $\VV^1_1(M)$ may also contain positive-dimensional, 
torsion-translated subtori \cite{Su02}. Such components do not pass
through the identity, and are thus not detected by resonance.
The ones described in \cite{DeS-plms} arise from certain 
``pointed multinets" on the arrangement, but there are 
some that arise in other ways \cite{LX}.

\subsection{Multinets and resonance components}
\label{subsec:nets}

The combinatorial data underlying the first resonance variety of $M(\A)$
is encoded in the notion of a multinet.

\begin{definition}
\label{def:multinet}
A \emph{multinet} on $\A$ is a partition
$\A=\A_1\sqcup\cdots\sqcup\A_k$ ($k\ge 3$) together with a
multiplicity function $m\colon\A\to\N_{>0}$ and a subset
$\XX\subseteq L_2(\A)$ called the \emph{base locus}, such that:
\begin{enumerate}[label=\textup{(\roman*)}, itemsep=1pt]
  \item the sum $d=\sum_{H\in\A_i}m_H$ is independent of $i$;
  \item for $H\in\A_i$ and $H'\in\A_j$ with $i\ne j$, the flat
        $H\cap H'$ lies in $\XX$;
  \item for each $X\in\XX$, the sum
        $n_X=\sum_{\substack{H\in\A_i \\ H\supset X}}m_H$
        is independent of $i$;
  \item within each class $\A_i$, any two hyperplanes can be
        connected by a chain with consecutive members meeting
        outside $\XX$.
\end{enumerate}
The common value $d$ is the \emph{weight}; we call $\cN$ a 
\emph{$(k,d)$-multinet}, or simply a \emph{$k$-multinet}.
If all $m_H=1$, the multinet is \emph{reduced}; if in addition 
every flat in $\XX$ meets exactly one hyperplane from each class, 
the multinet is a \emph{$(k,d)$-net}, or simply a \emph{$k$-net}. 
A $k$-net is \emph{non-trivial} if $d>1$, equivalently if 
$\rank(\A)\ge 3$. 
\end{definition}

By work of Pereira--Yuzvinsky \cite{PY08} and
Yuzvinsky \cite{Yu09}, if $\cN$ is a multinet with $|\XX|>1$
then $k\le 4$; moreover $k=3$ if any multiplicity $m_H\ne 1$.
Nets with $k=3$ arise in infinite families; the only known non-trivial 
$(4,d)$-net is the $(4,3)$-net on the Hessian arrangement
(see \S\ref{subsec:hessian}).

Since the slicing operation of \S\ref{subsec:arr} does not change
the resonance varieties $\RR^1_s(M(\A))$ or the characteristic varieties
$\VV^1_s(M(\A))$, we may assume without loss of generality that $\ell=3$.

Following \cite[Thm.~3.11]{FY07}, building on \cite{LY00}, a
$(k,d)$-multinet $\cN$ on an arrangement $\A$ in $\C^3$ 
determines an orbifold fibration
\[
  f_\cN\colon M(\A)\longrightarrow\Sigma_{0,k},
  \qquad f_\cN=[C_1:C_2],
\]
where $\Sigma_{0,k}=\CP^1\setminus\{k\text{ points}\}$ and
$C_i=\prod_{H\in\A_i}f_H^{m_H}$ for $i=1,\dots,k$. The
condition $\dim\spn\{C_1,\dots,C_k\}=2$ follows from the
multinet axioms, and the $k$ punctures of $\Sigma_{0,k}$
correspond to the values $[C_1],\dots,[C_k]$ in this pencil.
The induced map $f_\cN^*\colon H^1(\Sigma_{0,k};\C)\to
H^1(M(\A);\C)$ sends the loop $c_i$ about the $i$-th puncture
to $u_i=\sum_{H\in\A_i}m_H\,e_H$, and is injective. Setting
\[
  P_\cN = f_\cN^*(H^1(\Sigma_{0,k};\C))
  = \spn\{u_2-u_1,\dots,u_k-u_1\}
  \subset H^1(M(\A);\C),
\]
we obtain a rational linear subspace of dimension $k-1$.
Since $H^2(\Sigma_{0,k};\C)=0$, the space $H^1(\Sigma_{0,k};\C)$ 
is isotropic under the cup product, and hence $P_\cN$ is also isotropic. 
As shown in \cite[Thm.~2.4]{FY07}, $P_\cN$ is an
irreducible component of $\RR^1_1(M(\A))$, and the corresponding
component of $\VV^1_1(M(\A))$ through the identity is
$T_\cN=\exp(P_\cN)$, by formality of $M(\A)$.

More generally, if $\B\subseteq\A$ is a sub-arrangement
supporting a multinet $\cN$, the inclusion $M(\A)\hookrightarrow
M(\B)$ induces an injection $H^1(M(\B);\C)\hookrightarrow
H^1(M(\A);\C)$, and the image of $P_\cN$ under this embedding
is again an irreducible component of $\RR^1_1(M(\A))$, which we
continue to denote $P_\cN$. Conversely, by \cite[Thm.~2.4]{FY07},
every positive-dimensional irreducible component of
$\RR^1_1(M(\A))$ is of this form, for some multinet $\cN$ on
some sub-arrangement $\B\subseteq\A$. The simplest examples are
the \emph{local components}: for each flat $X\in L_2(\A)$ with
$q_X\ge 3$, the localization $\A_X$ carries a trivial
$(q_X,1)$-net, contributing a $(q_X-1)$-dimensional component
$L_X$. Components arising from multinets on the whole of $\A$
are called \emph{essential}; it is these components — and the
pencils $f_\cN$ associated to them — that play the central role
in the Milnor fiber story developed in \S\ref{sect:mf-arr}.

Finally, the higher-depth resonance varieties $\RR^1_s(M(\A))$
for $s\ge 2$ require no new ingredients: since the components of
$\RR^1_1(M(\A))$ are linear subspaces meeting only at the
origin, the depth-$s$ variety is simply the union of those
components of $\RR^1_1(M(\A))$ having dimension 
$\ge s+1$ \cite{LY00}. 

\section{Milnor fibrations of arrangements}
\label{sect:mf-arr}

\subsection{The Milnor fibration}
\label{subsec:mf-background}

Let $\A$ be a central arrangement of $n$ hyperplanes in $\C^{\ell}$,
of rank $r=\rank(\A)$, with defining polynomial $Q=Q(\A)$.
Since $Q(\A)$ is homogeneous of degree $n=|\A|$,
Milnor's fibration theorem \cite{Milnor} gives a smooth fiber bundle
\[
  Q\colon M(\A)\longrightarrow\C^*.
\]
Its generic fiber, the \emph{Milnor fiber} $F=F(\A)=Q^{-1}(1)$,
is a smooth affine algebraic variety of complex dimension $\ell-1$.
As such, $F$ is a Stein manifold, and by a classical theorem of
Andreotti--Frankel, it has the homotopy type of a finite
CW-complex of dimension at most $\ell-1$ (in fact, at most $r-1$).
The \emph{geometric monodromy} $h\colon F\to F$,
$h(z)=e^{2\pi \ii/n}z$, has finite order $n$ and generates the deck
transformation group of the cyclic cover
\[
  \sigma\colon F(\A)\longrightarrow U(\A),
\]
classified by the \emph{diagonal character}
\[
  \rho_n\colon\pi_1(U(\A))\longrightarrow\Z_n,
  \qquad\bar{\gamma}_H\longmapsto 1\text{ for all }H\in\A;
\]
see \cite{CS-jlms95}. 
Since the slicing operation of \S\ref{subsec:arr} preserves
$\pi_1(U(\A))$ and its meridional basis, it also preserves the
cover $\sigma\colon F(\A)\to U(\A)$, and hence $\pi_1(F(\A))$.

The monodromy $h$ induces a decomposition
\begin{equation}
\label{eq:H1F}
  H^1(F;\C)=\bigoplus_{j=0}^{n-1}H^1(F;\C)_{\theta^j},
  \quad\theta=e^{2\pi \ii/n},
\end{equation}
with eigenvalue-$1$ subspace $H^1(F;\C)_1\cong H^1(U;\C)$.
By \cite[Thm.~4.1]{DP-pisa}, $F$ carries a Deligne mixed Hodge structure
in which
\begin{align*}
  H^1(F;\C)_1 &= H^{1,1}(F), &\text{(pure of weight $2$)},\\
  W_1(F) &= H^1(F;\C)_{\ne 1}, &\text{(pure of weight $1$)}. 
\end{align*}
Moreover, the weight-$1$ piece decomposes as 
$W_1(F)=H^{1,0}(F)\oplus H^{0,1}(F)$, with  
$\dim H^{1,0}(F)=\dim H^{0,1}(F)$.

\subsection{Betti numbers and algebraic monodromy}
\label{subsec:mf-betti}

Fix the embedding $\Z_n \inj \C^*$ given by $1\mapsto
\theta=e^{2\pi \ii/n}$. Via this identification, we view
$\rho_n$ and its powers as rank-one complex characters in
$\Ch(U(\A))=\Hom(\pi_1(U(\A)),\C^*)\cong (\C^*)^{n-1}$.
The position of $\rho_n$ in the characteristic varieties of
$U(\A)$ controls $b_1(F(\A))$ and the characteristic polynomial
of the algebraic monodromy, given by
\begin{equation}
\label{eq:charpoly}
  \Delta_1(t) = (t-1)^{n-1}
  \prod_{\substack{j\mid n \\ j>1}}\Phi_j(t)^{e_j(\A)},
  \qquad e_j(\A) = \depth_1(\rho_n^{n/j}),
\end{equation}
where $\depth_1(\rho)=\max\{s\ge 0\mid \rho\in\VV^1_s(U)\}$
and $\Phi_j$ is the $j$-th cyclotomic polynomial
\cite{DeS-plms, Su-toul}.

Only essential components of $\VV^1_1(U)$ contribute to
$\Delta_1$. Recall that a subvariety of $(\C^*)^{n-1}$ is
\emph{essential} if it is not contained in any proper coordinate
subtorus. Since the diagonal subtorus
$D=\{(t,\dots,t)\mid t\in\C^*\}\subset(\C^*)^{n-1}$ meets
every proper coordinate subtorus only at the identity, any
non-essential component of $\VV^1_1(U)$ --- in particular any
local component or any component arising from a multinet on a
proper sub-arrangement of $\A$ --- intersects $D$ only at $\bo$,
and therefore contributes nothing to the exponents $e_k(\A)$
in~\eqref{eq:charpoly}.

Among the essential components, it is a further and more subtle
fact that the algebraic monodromy is controlled by whether the
supporting multinets are reduced:

\begin{prop}[\cite{PS-plms17}]
\label{prop:red-multi}
Let $\A$ be a central arrangement in $\C^3$.
If $\A$ admits a reduced multinet, then the algebraic
monodromy on $H_1(F;\C)$ is non-trivial.
\end{prop}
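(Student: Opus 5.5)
The plan is to show that a reduced multinet forces the exponent $e_k(\A)$ in the characteristic polynomial formula \eqref{eq:charpoly} to be positive for $k=|\text{classes}|$. That is, the diagonal character $\rho_n^{n/k}$ lies in $\VV^1_1(U)$. Then $\Delta_1(t)$ has a cyclotomic factor other than $(t-1)$, so the monodromy is non-trivial.

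Let $\cN$ be a reduced $(k,d)$-multinet on $\A$, with classes $\A_1,\dots,\A_k$. Since all $m_H=1$, each class has exactly $d$ hyperplanes, and hence $n=kd$; in particular $k\mid n$. Consider the pencil map $f_\cN\colon M(\A)\to\Sigma_{0,k}$, $f_\cN=[C_1:C_2]$. It is homogeneous of degree $0$, so it descends to an admissible map $\bar f\colon U(\A)\to\Sigma_{0,k}$, which has connected generic fiber and $\chi(\Sigma_{0,k})=2-k<0$. By Theorem~\ref{thm:arapura}, $T=\bar f^*(\Ch(\Sigma_{0,k}))$ is a $(k-1)$-dimensional component of $\VV^1_1(U)$ through $\bo$.

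Next we compute pullbacks. The loop around the $i$-th puncture pulls back to $u_i=\sum_{H\in\A_i}e_H$. Dually, on homology $\bar f_*(\bar x_H)=c_i$ for $H\in\A_i$, since $m_H=1$. Now take the character $\tau$ of $\pi_1(\Sigma_{0,k})$ sending every $c_i$ to $\zeta=e^{2\pi\ii/k}$. This is well defined because $\prod c_i=1$ and $\zeta^k=1$. Its pullback $\bar f^*\tau$ sends each $\bar\gamma_H$ to $\zeta=\theta^{n/k}$, so $\bar f^*\tau=\rho_n^{n/k}$. Consequently $\rho_n^{n/k}\in T\subseteq\VV^1_1(U)$, and $e_k(\A)=\depth_1(\rho_n^{n/k})\ge1$.

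One must also check that this point does not vanish for trivial reasons. Since $\tau\ne\bo$ and $\bar f^*$ is injective on character groups, we have $\rho_n^{n/k}\ne\bo$. In fact $k\ge3>1$, so $\Phi_k(t)$ genuinely divides $\Delta_1(t)$. Hence $H^1(F;\C)_{\ne1}\ne0$, and the monodromy on $H_1(F;\C)$, whose characteristic polynomial is $\Delta_1$, has an eigenvalue $\ne1$.

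The main point needing care is the precise identification $\bar f_*(\bar x_H)=c_i$ with coefficient one. This is exactly where reducedness enters: with multiplicities, $\bar x_H\mapsto m_Hc_i$, and the pulled-back character need not be diagonal. I would verify it by restricting to a meridian disk transverse to a generic point of $H$, on which $C_i$ vanishes to order $m_H=1$.
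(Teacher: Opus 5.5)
Your proof is correct and uses the same mechanism as the paper. The paper only cites \cite{PS-plms17} for this proposition, but its Lemma~\ref{lem:rho-on-torus} is exactly your argument in the case $k=3$: reducedness gives $\bar f_*(\bar x_H)=c_i$, so the diagonal character of $\Sigma_{0,k}$ pulls back to $\rho_n^{n/k}$, which then lies on the pencil component of $\VV^1_1(U)$, and \eqref{eq:charpoly} yields $e_k(\A)\ge 1$, i.e.\ a factor $\Phi_k(t)$ with $k\ge 3$ in $\Delta_1(t)$.
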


The converse fails in both directions. If $\A$ supports
essential multinets but none is reduced, the monodromy may
still be trivial: this is illustrated by the
$\operatorname{B}_3$ reflection arrangement, whose single
essential component arises from a non-reduced $(3,4)$-multinet
and whose diagonal subtorus meets that component only at $\bo$,
giving $\Delta_1(t)=(t-1)^8$; see \cite[\S 10.1]{Su-revroum}.
On the other hand, the monodromy may also be non-trivial even
without a reduced multinet, as illustrated by the arrangements
of type $G(3d+1,1,3)$ with $d>0$ \cite[Ex.~8.11]{PS-plms17}.
The precise mechanism by which a $(3,d)$-net $\cN$ on $\A$
forces $\bar\rho_n\in T_\cN$ is the content of
Lemma~\ref{lem:rho-on-torus} below.

\subsection{Pencils of lines in the plane}
\label{subsec:pencil}

Let $\A$ be a pencil of $n\ge 3$ lines through the origin
of $\C^2$; its defining polynomial
$Q=\prod_{j=0}^{n-1}(x-\omega^j y)$, where $\omega=e^{2\pi i/n}$,
has an isolated singularity at $0$.
The projectivized complement is $U(\A)=\Sigma_{0,n}$,
a sphere with $n$ punctures, and $M(\A)\cong U(\A)\times\C^*$.
The Milnor fiber $F(\A)=Q^{-1}(1)=\Sigma_{g,n}$ is a surface
of genus $g=\tfrac{(n-1)(n-2)}{2}$ with $n$ punctures,
giving $b_1(F)=n^2-2n+2$, consistent with
$\Delta_1(t)=(t-1)^{n-1}\Phi_n(t)^{n-1}$
from formula~\eqref{eq:charpoly}, since $\VV^1_1(U)=(\C^*)^n$
has $\depth_1(\rho_n)=n-1$ and $\depth_1(\rho_n^j)=0$
for $0<j<n-1$.
This arrangement supports a \emph{trivial} $(n,1)$-net, with one
hyperplane in each class and base locus $\XX=\{0\}$
the single intersection point at the origin; the
essential component $\VV^1_1(U)=(\C^*)^n$ arises from
this net.

The case $n=3$ is the building block for the entire argument.
Here $F(\A)=\Sigma_{1,3}$, a thrice-punctured elliptic curve,
with $b_1(F)=5$ and $\Delta_1(t)=(t-1)^2(t^2+t+1)$.
The monodromy has order $3$ and acts on $H^1(F;\C)$
with eigenvalue-$1$ subspace $H^1(F;\C)_1\cong H^1(U;\C)=\C^2$
and weight-$1$ piece
\[
W_1(F) = H^{1,0}(F)\oplus H^{0,1}(F), \qquad 
\dim H^{1,0}(F)=\dim H^{0,1}(F)=1,
\]
by \cite[Thm.~4.1]{DP-pisa}. The Milnor fiber cover
$\sigma\colon F(\A)\to U(\A)$ is exactly the
$\Z_3$-cover classified by the diagonal character
$\rho_3\colon\pi_1(U)\to\Z_3$, where
$\pi_1(U)\cong F_2$ is the free group of rank $2$.

\subsection{The reduced diagonal character}
\label{subsec:pencil-rho}

When $\A$ supports a reduced $3$-multinet, we have $3\mid n$
by the definition of multinet. The \emph{reduced diagonal character} is
\begin{equation}
\label{eq:bar-rho}
  \bar\rho_n \coloneqq \rho_n^{n/3},
\end{equation}
a primitive character of order $3$, sending each
$\bar\gamma_H\mapsto e^{2\pi \ii/3}$. 
The braid arrangement illustrates why one must work
with $\bar\rho_n$ rather than $\rho_n$.

\begin{example}
\label{ex:braid-rho}
Let $\A=\A_3$ be the braid arrangement in $\C^3$, defined by
$Q=(x+y)(x-y)(x+z)(x-z)(y+z)(y-z)$.
Labeling the hyperplanes as the factors of $Q$, the flats
in $L_2(\A)$ are $136$, $145$, $235$, $246$.
The arrangement supports a $(3,2)$-net with partition
$(12|34|56)$, which defines a pencil
$\psi\colon U\to\Sigma_{0,3}$,
$[x:y:z]\mapsto[x^2-y^2:x^2-z^2]$,
yielding by pullback a $2$-dimensional essential component
of $\VV^1_1(U)$:
\[
T=\{(s,s,t,t,(st)^{-1},(st)^{-1})\mid s,t\in\C^*\}
\subset(\C^*)^5.
\]
The diagonal character $\rho_6\colon\bar\gamma_H\mapsto
e^{2\pi \ii/6}$ satisfies $\rho_6\notin T$, while
$\bar\rho_6=\rho_6^2\colon\bar\gamma_H\mapsto e^{2\pi \ii/3}$
satisfies $\bar\rho_6\in T$, as guaranteed by
Lemma~\ref{lem:rho-on-torus} below.
Since $\VV^1_2(U)=\{\bo\}$, we have
$\depth_1(\bar\rho_6)=1$, giving
$b_1(F)=5+\varphi(3)\cdot 1=7$ and
$\Delta_1(t)=(t-1)^5(t^2+t+1)$.
\end{example}

The general fact illustrated by Example~\ref{ex:braid-rho} 
is the content of the following lemma, which is the key new 
ingredient of the proof of Theorem~\ref{thm:general}.

\begin{lemma}
\label{lem:rho-on-torus}
Let $\A$ be a central arrangement of $n=|\A|$ hyperplanes,
and let $\cN$ be a reduced $3$-multinet on $\A$, with associated
admissible map $f_\cN\colon U\to S=\Sigma_{0,3}$ and component
$T_\cN = f_\cN^*(H^1(S;\C^*))\subset\VV^1_1(U)$.
Then $\bar\rho_n=\rho_n^{n/3}\in T_\cN$.
\end{lemma}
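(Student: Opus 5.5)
The plan is to compute $T_\cN$ explicitly in meridional coordinates and then check that $\bar\rho_n$ satisfies the defining conditions. Let $\A=\A_1\sqcup\A_2\sqcup\A_3$ be the partition of the reduced $3$-multinet, with all $m_H=1$. By axiom (i) each class then has exactly $d=|\A_i|$ elements, so $n=3d$ and $n/3=d$.

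\emph{Step 1: the induced map on homology.} As recalled in \S\ref{subsec:nets}, $f_\cN$ sends the meridian $\bar\gamma_H$ for $H\in\A_i$ to a loop $c_i$ around the $i$-th puncture of $S=\Sigma_{0,3}$, with multiplicity $m_H=1$. This is the dual of the statement $f_\cN^*(c_i)=u_i=\sum_{H\in\A_i}e_H$. Hence
\[
(f_\cN)_*\colon H_1(U;\Z)\to H_1(S;\Z)=\langle c_1,c_2,c_3\mid c_1+c_2+c_3=0\rangle
\]
is given by $\bar x_H\mapsto c_i$ for $H\in\A_i$. This is compatible with the relation $\sum_H \bar x_H=0$, since $\sum_H \bar x_H\mapsto d(c_1+c_2+c_3)=0$.

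\emph{Step 2: identify the torus.} A character $\xi\in H^1(S;\C^*)$ is a triple $(t_1,t_2,t_3)$ with $t_1t_2t_3=1$. Its pullback $f_\cN^*\xi\in\Ch(U)$ sends $\bar\gamma_H\mapsto t_i$ for $H\in\A_i$. Therefore
\[
T_\cN=\bigl\{\rho\in\Ch(U)\bigm| \rho(\bar\gamma_H)=t_i \text{ for } H\in\A_i,\ t_1t_2t_3=1\bigr\}.
\]
This matches the parametrization in Example~\ref{ex:braid-rho}. Nothing beyond the description of $f_\cN$ and its effect on $H^1$ from \S\ref{subsec:nets} is needed, since $\pi_1(U)\to\pi_1(S)$ factors through abelianizations for rank-one characters.

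\emph{Step 3: check $\bar\rho_n$.} By definition $\bar\rho_n(\bar\gamma_H)=\theta^{n/3}=e^{2\pi\ii/3}=:\omega$ for every $H$. Take $t_1=t_2=t_3=\omega$. Then $t_1t_2t_3=\omega^3=1$, so $(\omega,\omega,\omega)\in H^1(S;\C^*)$, and its pullback is exactly $\bar\rho_n$. Thus $\bar\rho_n\in T_\cN$.

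The main point requiring care is Step 1: that each meridian of a hyperplane in $\A_i$ maps to the loop around the $i$-th puncture with multiplicity exactly $1$. This is where reducedness enters; in general the multiplicity would be $m_H$. I would justify it locally. Near a generic point of $H\in\A_i$, the pencil $[C_1:C_2]$ near the value $[C_i]$ is given in a local coordinate by $f_H^{m_H}\cdot(\text{unit})$, so a small loop around $H$ maps to $c_i^{m_H}$. Hyperplanes of $\A_3$ require the same computation with $[C_3]$ written as a combination of $C_1$ and $C_2$, which is possible by the dimension-$2$ span condition. The analogous check with $\rho_n$ itself would need $\theta^3=1$, which fails unless $n=3$; this explains why the power $n/3$ is needed.
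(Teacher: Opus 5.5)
Your proposal is correct and matches the paper's proof. Both pull back the order-$3$ diagonal character $(\omega,\omega,\omega)$ of $S=\Sigma_{0,3}$ along $f_\cN$, using that reducedness gives $(f_\cN)_*(\bar\gamma_H)=c_i$ for $H\in\A_i$, so that $f_\cN^*(\omega,\omega,\omega)=\bar\rho_n$. Your local computation of the meridian images, including the $\A_3$ case via $C_3\in\spn\{C_1,C_2\}$, justifies a step the paper only asserts; the explicit description of all of $T_\cN$ is more than is needed but harmless.
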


\begin{proof}
Let $\rho_S\colon\pi_1(S)\to\C^*$ be the diagonal character
of $S=\Sigma_{0,3}$, sending each meridional loop $c_j$
to $e^{2\pi i/3}$. Since $\cN$ is a reduced $3$-multinet,
each $H\in\A$ belongs to exactly one class $\A_j$ ($j=1,2,3$),
so $3\mid n$, and the admissible map satisfies
$(f_\cN)_*(\bar\gamma_H)=c_j$. Therefore
\[
  (f_\cN^*\rho_S)(\bar\gamma_H) = \rho_S(c_j) = e^{2\pi i/3},
\]
which shows $f_\cN^*\rho_S = \bar\rho_n$.
Hence $\bar\rho_n\in f_\cN^*(H^1(S;\C^*))=T_\cN$.
\end{proof}

\subsection{Formality of the Milnor fiber}
\label{subsec:mf-formal}

Since $F(\A)$ has the homotopy type of a CW-complex of dimension
at most $r-1$, Theorem~\ref{thm:partial-to-formal} gives: $F$ is
$(r-2)$-formal if and only if $F$ is formal. Thus $q$-formality
is a non-trivial condition only for $1\le q\le r-2$.

Applying Lemma~\ref{lem:dp-formality} to the cyclic cover
$\sigma\colon F(\A)\to U(\A)$, and using the formality of
$U(\A)$ \cite{Br}, we obtain:

\begin{corollary}
\label{cor:mf-formal}
If the algebraic monodromy $h_*$ acts trivially on $H_i(F;\Q)$ for 
all $i\le q$, then $F(\A)$ is $q$-formal. In particular, if this holds for all
$i\le r-2$, then $F(\A)$ is formal.
\end{corollary}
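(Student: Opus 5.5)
The plan is to deduce Corollary~\ref{cor:mf-formal} directly from Lemma~\ref{lem:dp-formality}, applied to the Milnor fiber cover $\sigma\colon F(\A)\to U(\A)$. This is a finite regular cover with deck group $H=\Z_n$, generated by the geometric monodromy $h(z)=e^{2\pi\ii/n}z$. Two hypotheses of Lemma~\ref{lem:dp-formality} need to be checked.

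First, the base $U(\A)$ must be $q$-formal. For this I will use $M(\A)\cong U(\A)\times\C^*$ together with the formality of $M(\A)$ (\S\ref{subsec:os}). A retract of a formal space is formal, since its minimal model is a retract of a formal minimal model. Here $U$ is a retract of $M$ via the inclusion $U\times\{1\}\inj M$ followed by the projection. Alternatively, $U$ is the complement of a projective hypersurface arrangement with pure MHS, so purity implies formality directly. Either way, $U$ is formal, hence $q$-formal for every $q$.

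Second, the deck group $H$ must act trivially on $H^i(F;\Q)$ for all $i\le q$. Since $H$ is cyclic and generated by $h$, it suffices that $h^*$ is the identity on $H^i(F;\Q)$ for $i\le q$. By the universal coefficient theorem over the field $\Q$, $H^i(F;\Q)=\Hom(H_i(F;\Q),\Q)$, and $h^*$ is the transpose of $h_*$. So triviality of $h_*$ on $H_i(F;\Q)$ is equivalent to triviality of $h^*$ on $H^i(F;\Q)$. Lemma~\ref{lem:dp-formality} then gives that $F(\A)$ is $q$-formal.

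For the ``in particular'' clause, I apply the first part with $q=r-2$ to get $(r-2)$-formality of $F$. Since $F$ has the homotopy type of a CW-complex of dimension at most $r-1$, we have $H^i(F;\Q)=0$ for $i\ge r=(r-2)+2$. Theorem~\ref{thm:partial-to-formal} then upgrades $(r-2)$-formality to full formality.

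There are two degenerate cases to treat separately. If $r\le 2$, then $F$ has the homotopy type of a graph or a finite set. A graph is always formal, and a disconnected $F$ would require care, but $F$ is connected whenever $\gcd$ of the multiplicities is $1$, which holds here. The only step needing any justification is the formality of $U$ rather than $M$; this is handled by the retract argument, or by citing \cite{Br} as the paper does.
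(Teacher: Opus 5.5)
Your proposal is correct and follows the paper's own argument. You apply Lemma~\ref{lem:dp-formality} to the $\Z_n$-cover $\sigma\colon F(\A)\to U(\A)$ using the formality of $U(\A)$, and then upgrade $(r-2)$-formality to formality via Theorem~\ref{thm:partial-to-formal} and the dimension bound on $F$; the extra details you add (the retract argument for $U$, passing from $h_*$ to $h^*$ by universal coefficients, and the low-rank cases) are sound and only make explicit what the paper leaves implicit.
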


In particular, if $F$ is simply connected then $b_1(F)=0$ and
$F$ is automatically $1$-formal. This does not occur for
hyperplane arrangement Milnor fibers (since $b_1(F)\ge n-1\ge 1$
for $n\ge 2$), but it is the situation in the Fern\'andez de
Bobadilla construction discussed in \S\ref{subsec:FdB}.

Since $1$-formality depends only on $\pi_1$, and the slicing
operation of \S\ref{subsec:arr} preserves $\pi_1(F(\A))$
by \S\ref{subsec:mf-background}, the $1$-formality of
$F(\A)$ is likewise preserved. Thus for the purposes of
Question~\ref{q:milnorf} and Theorem~\ref{thm:general},
it suffices to assume $\rank(\A)=3$.

For rank-$3$ arrangements, $F(\A)$ is a
finite $2$-complex, so $1$-formality and formality coincide by
Theorem~\ref{thm:partial-to-formal}. Zuber \cite{Zu} gave the 
first negative answer to Question~\ref{q:milnorf} for the Ceva
arrangement $\A(3,3,3)$; the present paper gives an infinite
family of counterexamples and identifies the combinatorial
mechanism responsible.

\subsection{The Fern\'andez de Bobadilla construction}
\label{subsec:FdB}

Fern\'andez de Bobadilla \cite{FdB} gave a general method for
constructing non-formal Milnor fibers outside the hyperplane
arrangement setting. The key ingredient is a realization theorem:
for any ideal $I=(f_1,\ldots,f_r)\subset\mathcal{O}_{\C^n,O}$,
the Milnor fiber of the function germ
\[
F_I(x,y)=\sum_{i=1}^r y_i f_i(x), \qquad (x,y)\in(\C^{n+r},O),
\]
is homotopy equivalent to the complement $\C^n\setminus V(I)$,
with trivial geometric monodromy \cite[Thm.~1]{FdB}. Applying
this to the coordinate subspace arrangements of Denham--Suciu
\cite{DeS07} in $\C^6$ — whose complements carry non-trivial
Massey triple products — produces a family of quasi-homogeneous
polynomials whose Milnor fibers are simply connected and
non-formal.

This construction differs from the approach of the present paper
in two respects. First, the polynomials produced are not defining
polynomials of hyperplane arrangements, living instead in a
higher-dimensional ambient space. Second, the non-formality is
witnessed by non-vanishing Massey products in $H^*(F;\C)$,
whereas our arguments detect the failure of $1$-formality via
the Tangent Cone Theorem and mixed Hodge structure considerations.

\subsection{Aomoto--Betti numbers and monodromy eigenvalues}
\label{subsec:modular}

For a field $\k$, the Orlik--Solomon algebra $A(\A)$ may be
tensored with $\k$ to give a cochain complex
$(A(\A)\otimes\k,\,\delta_\sigma)$, where $\delta_\sigma$ 
denotes left-multiplication by the \emph{diagonal class}
$\sigma=\sum_{H\in\A}e_H\in A^1(\A)$. Introduced 
in \cite{PS-plms17}, the \emph{Aomoto--Betti number} of $\A$ 
at a prime $p$ is the combinatorial invariant
\begin{equation}
\label{eq:beta-p}
\beta_p(\A)\coloneqq\dim_{\FF_p} H^1(A(\A)\otimes\FF_p,\,\delta_\sigma),
\end{equation}
which depends only on $L_{\le 2}(\A)$ and $p$.

The significance of $\beta_p(\A)$ for the Milnor fibration is
given by the following result of Papadima--Suciu
\cite[Thm.~1.1]{PS-plms17}: under the hypothesis that
$L_2(\A)$ has no flats of multiplicity divisible by $p^2$,
\begin{equation}
\label{eq:beta-e}
\beta_p(\A) = e_p(\A),
\end{equation}
where $e_p(\A)=\depth_1(\rho_n^{n/p})$ is the multiplicity of
the eigenvalue $e^{2\pi \ii/p}$ in $\Delta_1(t)$. In particular,
$\beta_p(\A)\ne 0$ forces $e_p(\A)\ne 0$, hence non-trivial
monodromy, and therefore non-formality of $F(\A)$ by
Corollary~\ref{cor:mf-formal}.

Over $\C$, the connection between $\beta_p$ and nets runs deeper:
by \cite[Thm.~1.4]{PS-plms17}, under the same multiplicity
hypothesis, $\beta_p(\A)\ne 0$ if and only if $\A$ supports a
reduced $p$-net, and the number of essential components of
$\RR^1_1(M;\C)$ arising from $p$-nets equals
$(p^{\beta_p(\A)}-1)/(p-1)$. Over $\FF_p$, however, the
resonance varieties are less rigid: components need not be
linear \cite{Fa07}, and non-vanishing Massey products in
$H^*(M;\FF_p)$ can appear \cite{Ma07}.

\section{Reflection arrangements}
\label{sect:reflection-arrs}

\subsection{Reflection arrangements}
\label{subsec:reflection}

An important and well-studied class of arrangements arises from
finite reflection groups. Let $W\subset\GL(\C^\ell)$ be a finite
group generated by (pseudo-)reflections. The \emph{reflection
arrangement} of $W$ is the collection $\A(W)$ of all reflecting
hyperplanes of elements of $W$. The complement $M(\A(W))$ is the
configuration space of regular orbits of $W$, and its fundamental
group is the \emph{pure Artin group} associated to $W$.

\medskip\noindent
\textbf{Weyl/Coxeter arrangements.}
When $W$ is a finite Weyl or Coxeter group, the arrangement
$\A(W)$ is the complexification of a real hyperplane arrangement.
The primary example is the \emph{braid arrangement} $\A_{n-1}$,
the reflection arrangement of $\mathfrak{S}_n$ acting on $\C^n$,
whose complement $M(\A_{n-1})=\Conf_n(\C)$ is a classifying
space $K(P_n,1)$ for the pure braid group. More generally, for
any finite-type Artin group, the complement of the associated
complexified Coxeter arrangement is a $K(\pi,1)$ \cite{CD95},
providing a direct link between the combinatorics of $L(\A(W))$
and the classifying space topology of the Artin group.

\medskip\noindent
\textbf{Complex reflection arrangements.}
More generally, one may consider finite groups
$W\subset\GL(\C^\ell)$ generated by complex reflections (elements
with a single eigenvalue different from $1$). By the
Shephard--Todd classification \cite{OT}, the irreducible complex
reflection groups form an infinite family $G(m,p,\ell)$ together
with $34$ exceptional groups. The complement $M(\A(W))$ is again
a $K(\pi,1)$ for the associated complex braid group
\cite[Thm.~0.4]{Bessis}. For a comprehensive treatment of
reflection arrangements, including an in-depth study of their
combinatorics and topology, we refer to \cite[Ch.~6]{OT}.

\subsection{The monomial family and its Milnor fiber}
\label{subsec:monomial}

Of primary interest to us is the infinite family
$G(m,m,3)\subset\GL(\C^3)$, whose reflection arrangement is the
\emph{monomial arrangement} $\A(m,m,3)$, defined by
\[
  Q(\A(m,m,3))=(z_1^m-z_2^m)(z_1^m-z_3^m)(z_2^m-z_3^m).
\]
It has $n=3m$ hyperplanes. The case $m=1$ gives the braid
arrangement $\A_2$ (type $A_2$, three lines through the origin);
$m=2$ gives an arrangement of $6$ hyperplanes with $4$ triple
points; and $m=3$ gives the \emph{Ceva arrangement} $\A(3,3,3)$
with $9$ hyperplanes and $12$ triple points, the running example
of this paper.

The intersection lattice of $\A(m,m,3)$ has a uniform
description: there are $3$ flats of multiplicity $m$ and all
remaining flats in $L_2(\A)$ have multiplicity $3$. Each
$\A(m,m,3)$ supports a $(3,m)$-net with partition given by the
three factors of $Q$, with Latin square corresponding to $\Z_m$
\cite[Ex.~5.4]{FY07}. The reduced/non-reduced dichotomy depends
on divisibility:
\[
  \beta_3(\A(m,m,3))=
  \begin{cases} 2 & \text{if }3\mid m,\\ 1 & \text{if }3\nmid m,
  \end{cases}
\]
see \cite[Ex.~8.10]{PS-plms17}. The algebraic monodromy of the
full family is determined by the MPP theorem \cite{MPP}:
\begin{equation}
\label{eq:mpp-mono}
  e_p(\A(m,m,3))=\beta_p(\A(m,m,3))
  \quad\text{for all }m\ge 1\text{ and all primes }p.
\end{equation}
In particular, the only non-trivial monodromy eigenvalue comes
from $p=3$: when $3\mid m$, $e_3=2$; when $3\nmid m$, $e_3=1$.

Dimca \cite{Di19} computed $\dim W_1(F)$ for all irreducible
complex reflection arrangements. For the monomial family, using
\eqref{eq:mpp-mono} and the MHS decomposition of \S\ref{subsec:mf-background}:
\begin{itemize}[itemsep=2pt]
  \item if $3\mid m$: $\dim W_1(F)=4$, with
        $\dim H^1(F;\C)_\lambda=2$ for $\lambda=e^{2\pi \ii/3}$;
  \item if $3\nmid m$: $\dim W_1(F)=2$.
\end{itemize}
The case $3\mid m$ (i.e., $m=3k$) is precisely what
Theorem~\ref{thm:monomial} covers: the threshold $\dim W_1(F)=4$
is exactly what the proof of Theorem~\ref{thm:general} requires.
When $3\nmid m$, the dimension $\dim W_1(F)=2$ is too small for
the pincer argument to produce a contradiction; the
$1$-formality of $F(\A(m,m,3))$ for $3\nmid m$ remains open.

Among the exceptional groups, $G_{25}\subset\GL(\C^3)$ gives
rise to the \emph{Hessian arrangement} $\A=\A(G_{25})$,
defined by the polynomial
\[
  Q(\A) = z_1 z_2 z_3
  \prod_{j,k\in\Z_3}(z_1+\omega^j z_2+\omega^k z_3),
  \qquad \omega=e^{2\pi \ii/3},
\]
consisting of $12$ hyperplanes with $9$ quadruple points 
and $12$ double points. Dimca shows $\dim W_1(F)=6$; see
\S\ref{subsec:hessian} for a detailed discussion.

\begin{example}
\label{ex:ceva-nets}
The Ceva arrangement $\A(3,3,3)$ is the $m=3$ case of the
monomial family, so $\beta_3=2$ and $e_3=2$. It supports
exactly four $3$-nets, with partitions
$(123|456|789)$, $(147|258|369)$, $(159|267|348)$, and
$(168|249|357)$ in a suitable ordering of the $9$ hyperplanes.
The resonance variety $\RR^1_1(\A(3,3,3))$ has $12$ local
components (from the $12$ triple points) and $4$ essential
$2$-dimensional components, one for each net.
\end{example}

\section{Proof of the main theorems}
\label{sect:main}

We now prove Theorems~\ref{thm:general} and~\ref{thm:monomial}.
The key new ingredient, beyond Zuber's original argument for
$\A(3,3,3)$, is Lemma~\ref{lem:rho-on-torus}, which identifies
the precise torsion character linking the net components of
$\VV^1_1(U)$ to the Milnor fiber cover;
see Figure~\ref{fig:pincer} for a schematic of the overall argument.

\subsection{Cup-product vanishing}
\label{subsec:zero-cup}

We start by recalling a lemma from \cite{Zu}; we include a proof
based on weight considerations.

\begin{lemma}[Zuber {\cite[Lem.~2]{Zu}}]
\label{lem:cup-vanish}
For all $\alpha\in H^1(F;\C)_\lambda$ and
$\beta\in H^1(F;\C)_{\bar\lambda}$ with $\lambda\ne 1$,
one has $\alpha\cup\beta=0$. Consequently,
$W_1(F)\subset \RR^1_1(F)$.
\end{lemma}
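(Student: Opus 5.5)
Throughout, $F$ is the Milnor fiber of a central arrangement; by the slicing remarks in \S\ref{subsec:mf-formal} we may assume $\rank(\A)=3$, so $F$ is a smooth affine surface. In that case the cup product
\[
  H^1(F;\C)\otimes H^1(F;\C)\to H^2(F;\C)
\]
is a morphism of mixed Hodge structures. The plan is to show that the product $\alpha\cup\beta$ lands in a weight piece of $H^2(F)$ that the monodromy-eigenvalue constraints force to vanish.

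\textbf{Step 1: the eigenspace of $\alpha\cup\beta$.} The monodromy $h$ is an algebraic automorphism of $F$, so $h^*$ respects both cup products and mixed Hodge structures. For $\alpha\in H^1_\lambda$ and $\beta\in H^1_{\bar\lambda}$ we get
\[
  h^*(\alpha\cup\beta)=\lambda\bar\lambda\,\alpha\cup\beta=\alpha\cup\beta .
\]
Hence $\alpha\cup\beta$ lies in the invariant part $H^2(F;\C)_1$.

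\textbf{Step 2: identify and weight-bound $H^2(F;\C)_1$.} By transfer for the finite cover $\sigma\colon F\to U$, the invariant part is the image of pullback:
\[
  H^2(F;\C)_1=\sigma^*H^2(U;\C).
\]
Since $\sigma$ is algebraic and $H^2(U)$ is pure of weight $4$ (Shapiro/Brieskorn purity, as $H^*(M)$ is pure of type $(p,p)$), the space $H^2(F;\C)_1$ is pure of weight $4$.

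\textbf{Step 3: weight of the product.} The classes $\alpha,\beta$ lie in $W_1(F)$, since $H^1(F)_{\ne1}$ is pure of weight $1$ by \cite[Thm.~4.1]{DP-pisa}. Cup product is compatible with weight filtrations, so $\alpha\cup\beta\in W_2H^2(F)$. A class of weight $\le2$ lying in a subspace pure of weight $4$ must be zero. Therefore $\alpha\cup\beta=0$.

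\textbf{Main obstacle.} The delicate point is Step 2. Purity of $\sigma^*H^2(U)$ as a sub-MHS uses that $\sigma^*$ is a morphism of MHS, which it is. To be safe, I would check the argument directly: the image of $W_2H^2(F)$ in $H^2(F)_1$ equals $\sigma^*(W_2H^2(U))=0$. This requires strictness of morphisms of MHS together with the fact that the projection onto invariants, $\frac1n\sum (h^*)^j$, is itself a morphism of MHS. With that in hand, Step 3 closes the argument.

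\textbf{The consequence.} First compute the cup products on $W_1(F)$. By the eigenspace decomposition \eqref{eq:H1F}, $W_1(F)=\bigoplus_{\lambda\ne1}H^1_\lambda$. For $\mu\ne\bar\lambda$, the product $H^1_\lambda\cup H^1_\mu$ lies in the $\lambda\mu\ne1$ eigenspace of $H^2$. That eigenspace has weight $\le2$ from $W_1\otimes W_1$. To kill these terms I would use a different, dimension-free route. Take $a\in W_1(F)$; by the above it suffices to check the resonance condition. If $a\ne0$, choose an eigencomponent $a_\lambda\ne0$ and set
\[
  b=\overline{a_\lambda}\in H^1_{\bar\lambda},
\]
where complex conjugation preserves $W_1$ and swaps eigenspaces. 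We need $a\cup b=0$ with $b\notin\C a$. The first condition does not follow automatically when $a$ has several components, so I would instead show the cup product vanishes on all of $W_1\otimes W_1$. For this, $H^1_\lambda\cup H^1_\mu$ lands in $W_2H^2(F)$. The space $W_2H^2(F)$ is the image of the weight-$2$ part of the smooth compactification's $H^2$. I expect the intended approach is that for $\lambda\mu\ne1$ one again gets vanishing by weights plus Hodge type. If that fails, I would restrict to the simpler route: for a single eigenvector $a\in H^1_\lambda$, take $b=\bar a\in H^1_{\bar\lambda}$, which is independent of $a$ since $\lambda\ne\bar\lambda$ or the Hodge types differ. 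Then $a\cup b=0$ and $a\cup a=0$, so $a\in\RR^1_1(F)$. Closedness of $\RR^1_1(F)$ then handles general sums once vanishing on all of $W_1\otimes W_1$ is established.
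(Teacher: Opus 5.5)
Your Steps 1--3 are the paper's argument for the main assertion: the product is $h^*$-invariant, so it lies in $H^2(F;\C)_1=\sigma^*H^2(U;\C)$, which is pure of weight $4$, while it has weight $\le 2$. You also spell out the strictness step. The reduction to rank $3$ is harmless but unnecessary, since the weight argument works in any rank.

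The gap is in the consequence $W_1(F)\subset\RR^1_1(F)$. What you actually prove is that each eigenvector $a\in H^1(F;\C)_\lambda$, $\lambda\ne 1$, lies in $\RR^1_1(F)$. The final appeal to closedness of $\RR^1_1(F)$ cannot pass from eigenvectors to their sums. The set $\bigcup_{\lambda\ne1}H^1(F;\C)_\lambda$ is already Zariski closed, and $\RR^1_1(F)$ is not a linear space, so closedness says nothing about the span.

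Your fallback, vanishing of $H^1_\lambda\cup H^1_\mu$ for $\lambda\mu\ne1$ ``by weights plus Hodge type'', is not available. Such a product lies in $W_2H^2(F;\C)_{\lambda\mu}$, and nothing forces this to vanish, because unlike $H^2(F;\C)_1$ it is not pulled back from $U$. The paper itself concedes this in its Hessian discussion. Its one-line justification (``the cup product vanishes identically on $W_1(F)$'') does not follow from the first assertion either.

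There is also a smaller slip. When $\lambda=-1$ and $a$ is real, $b=\bar a$ equals $a$, and a class in $H^1_{-1}$ need not have pure Hodge type.

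The missing idea is to choose $b$ so that only conjugate cross terms survive. This suffices whenever the non-unit spectrum of $h^*$ on $H^1(F;\C)$ is a single conjugation orbit $\{\lambda,\bar\lambda\}$, as for $\A(3k,3k,3)$, where only the primitive cube roots of unity occur.

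Write $a=a_\lambda+a_{\bar\lambda}$. If both components are nonzero, take $b=a_\lambda-a_{\bar\lambda}$, which is not in $\C a$. Since $a_\lambda\cup a_\lambda=a_{\bar\lambda}\cup a_{\bar\lambda}=0$ and $a_{\bar\lambda}\cup a_\lambda=-a_\lambda\cup a_{\bar\lambda}$, you get $a\cup b=-2\,a_\lambda\cup a_{\bar\lambda}$. This vanishes by the first part.

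If one component vanishes, pair $a$ with any nonzero vector of the conjugate eigenspace. That eigenspace has the same dimension because $h^*$ is defined over $\Q$.

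For the eigenvalue $-1$, the first part with $\lambda=\bar\lambda=-1$ shows that $H^1(F;\C)_{-1}$ is isotropic. It also has even dimension, so it lies in $\RR^1_1(F)$ whenever it is nonzero; this also repairs your choice $b=\bar a$.

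When the non-unit spectrum contains more than one conjugation orbit, terms $a_\lambda\cup a_\mu$ with $\mu\notin\{\lambda,\bar\lambda\}$ block this trick. In that case neither your argument nor the paper's establishes the inclusion.
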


\begin{proof}
Since $h^*\alpha=\lambda\alpha$ and $h^*\beta=\bar\lambda\beta$,
we have
\[
h^*(\alpha\cup\beta)=(h^*\alpha)\cup(h^*\beta)
=\lambda\bar\lambda\,\alpha\cup\beta=\alpha\cup\beta.
\]
Thus $\alpha\cup\beta\in H^2(F;\C)_1\cong H^2(U;\C)$,
which has pure weight $4$. On the other hand, $\alpha$
and $\beta$ have weight $\le 1$, so $\alpha\cup\beta$
has weight $\le 2$. Hence $\alpha\cup\beta=0$, and the
last assertion follows since the cup product vanishes
identically on $W_1(F)$.
\end{proof}

\subsection{The lifted pencil}
\label{subsec:lift}

Let $\cN$ be a reduced $3$-multinet on $\A$ of weight $d$, 
with associated pencil $\psi\colon U\to S=\Sigma_{0,3}$.
Let $\B$ be the pencil of $3$ lines in $\C^2$ defined by
$uv(u+v)=0$, so that $S=U(\B)$ and $\hat{S}=F(\B)=\Sigma_{1,3}$,
as in \S\ref{subsec:pencil}.
The cover $\nu\colon\hat{S}\to S$ is the $\Z_3$-cover
classified by the diagonal character
$\hat\rho_3\colon\pi_1(S)\to\Z_3$.

By Lemma~\ref{lem:rho-on-torus}, $\bar\rho_n\in T_\cN$,
so the Milnor fiber cover $\sigma\colon F\to U$ is
compatible with $\psi$. By \cite[Prop.~1]{Zu}, there
exists a commutative diagram
\begin{equation}
\label{eq:lifted-pencil}
\begin{tikzcd}[column sep=32pt, row sep=28pt]
F \ar[d, "\sigma"] \ar[r, "\tilde{\psi}"] & \hat{S} \ar[d, "\nu"] \\
U \ar[r, "\psi"] & S
\end{tikzcd}
\end{equation}
in which $\tilde{\psi}$ has connected generic fiber.
Setting $E=\tilde{\psi}^*H^1(\hat{S};\C)\subset H^1(F;\C)$,
we have $\dim E=4$ since $b_1(\hat{S})=4$.
The mixed Hodge structure on $H^1(\hat{S};\C)$
(cf.~\S\ref{subsec:pencil}) yields
\[
E=E^{1,1}\oplus E^{1,0}\oplus E^{0,1},
\qquad
\dim E^{1,1}=2,\quad
\dim E^{1,0}=1,\quad
\dim E^{0,1}=1,
\]
and in particular
\begin{equation}
\label{eq:E-Hodge}
\dim(E\cap H^{1,0}(F))=1.
\end{equation}
Moreover, $E$ is the tangent space at $\bo$ to the
component $\tilde{T}=\sigma^*(T_\cN)$ of $\VV^1_1(F)$
through $\bo$.

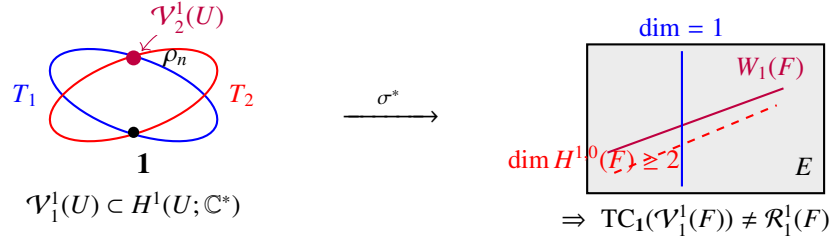
\begin{figure}[t]
\[
\begin{tikzpicture}[baseline=(current bounding box.center),scale=0.9]
\begin{scope}[xshift=-3.8cm]
  \draw[blue, thick, rotate around={-20:(0,0.55)}]
    (0,0.55) ellipse (1.3cm and 0.55cm);
  \draw[red, thick, rotate around={20:(0,0.55)}]
    (0,0.55) ellipse (1.3cm and 0.55cm);
  \node[circle,fill=black,inner sep=1.5pt] (id) at (0,0) {};
  \node at (0.15,0) [label=below:$\mathbf{1}$] {};
  \node[circle,fill=purple,inner sep=2pt] (rho) at (0,1.1) {};
  \node at (0.15,1.1) [label=right:{\small$\rho_n$}] {};
  \draw[purple, ->, bend right=20]
    (0.75,1.62) to (0.12,1.2);
  \node[purple, font=\small, fill=white, inner sep=1pt] at (0.8,1.7) {$\VV^1_2(U)$};
  \node[blue, font=\small] at (-1.6,0.55) {$T_1$};
  \node[red, font=\small] at (1.6,0.55) {$T_2$};
  \node[font=\small] at (0,-1.1) {$\VV^1_1(U)\subset H^1(U;\C^*)$};
\end{scope}

\node at (0,0.4) {$\xrightarrow{\quad\sigma^*\quad}$};

\begin{scope}[xshift=4.5cm]
  \fill[gray!15] (-1.6,-0.9) -- (1.9,-0.9) -- (1.9,1.3) -- (-1.6,1.3) -- cycle;
  \draw[thick] (-1.6,-0.9) -- (1.9,-0.9) -- (1.9,1.3) -- (-1.6,1.3) -- cycle;
  \node[font=\small] at (1.6,-0.5) {$E$};
  \draw[purple, thick] (-1.3,-0.3) -- (1.3,0.65);
  \node[purple, font=\small] at (1.1,.95) {$W_1(F)$};
  \draw[red, thick, dashed] (-1.2,-0.6) -- (1.2,0.4);
  \node[red, font=\small] at (-1.5,-0.4)
    {$\dim H^{1,0}(F)\ge 2$};
  \draw[blue, thick] (-0.2,-0.8) -- (-0.2,1.2);
  \node[blue, font=\small] at (-0.2,1.55)
    {$\dim=1$};
  \node[font=\small] at (0,-1.3)
    {$\Rightarrow\;\TC_{\bo}(\VV^1_1(F))\ne\RR^1_1(F)$};
\end{scope}
\end{tikzpicture}
\]
\caption{The pincer argument: two net components $T_1,T_2$ of
$\VV^1_1(U)$ meet at the torsion point $\rho_n$, forcing
$\dim H^{1,0}(F)\ge 2$ (red), while the lifted pencil
forces $\dim(E\cap H^{1,0}(F))=1$ (blue). These are incompatible
with $1$-formality via the Tangent Cone Theorem.}
\label{fig:pincer}
\end{figure}

\subsection{Proof of Theorem~\ref{thm:general}}
\label{subsec:proof-general}

Let $\cN_1$ and $\cN_2$ be two distinct reduced $3$-multinets on $\A$,
with corresponding components $T_1\ne T_2$ of $\VV^1_1(U)$.
Since $\A$ supports a $3$-multinet, we have $3\mid n$; set
$\bar\rho_n=\rho_n^{n/3}$, the \emph{reduced diagonal character},
of order $3$, sending each $\bar\gamma_H$ to $e^{2\pi \ii/3}$.
By Lemma~\ref{lem:rho-on-torus}, $\bar\rho_n\in T_1\cap T_2$.
Since $\bar\rho_n$ is a torsion character,
Theorem~\ref{thm:acm} (with $r=s=1$) gives
$\bar\rho_n\in\VV^1_2(U)$.

By formula~\eqref{eq:charpoly}, $e_3(\A)=\depth_1(\bar\rho_n)\ge 2$.
In particular, the eigenvalue $e^{2\pi \ii/3}$ appears with
multiplicity at least $2$ in the monodromy on $H^1(F;\C)$, so
$\dim W_1(F)\ge 4$. Since $\dim H^{1,0}(F)=\dim H^{0,1}(F)$,
\begin{equation}
\label{eq:h10-dim}
\dim H^{1,0}(F) = \tfrac{1}{2}\dim W_1(F) \ge 2.
\end{equation}

Fix the net $\cN_1$ with pencil $\psi\colon U\to S$.
Since $\bar\rho_n\in T_1=T_{\cN_1}$, 
the Milnor fiber cover $\sigma\colon F\to U$ is compatible 
with $\psi$ in the sense of \S\ref{subsec:lift}, and
the lifted pencil gives a $4$-dimensional subspace
$E\subset H^1(F;\C)$ with $\dim(E\cap H^{1,0}(F))=1$
by~\eqref{eq:E-Hodge}.

Assume for contradiction that $F$ is $1$-formal.
By Theorem~\ref{thm:tcone}, the irreducible components of
$\RR^1_1(F)$ are precisely the tangent spaces at $\bo$ to
the components of $\VV^1_1(F)$ through $\bo$; in particular,
$E$ is an irreducible component of $\RR^1_1(F)$.
Since $F$ is a smooth quasi-projective variety,
distinct positive-dimensional components of $\VV^1_1(F)$
through $\bo$ meet in finitely many points
\cite[Lem.~6.12]{DPS-duke}, and since $E'$ and $E''$
are linear subspaces, we conclude
\begin{equation}
\label{eq:disjoint}
E'\cap E''=\{0\}
\quad\text{for distinct irreducible components of }\RR^1_1(F).
\end{equation}
By Lemma~\ref{lem:cup-vanish}, $W_1(F)\subset\RR^1_1(F)$.
Since $W_1(F)$ is linear and components meet only at $\{0\}$,
it is contained in a single component $E'$.
Since $E^{1,0}\oplus E^{0,1}\subset W_1(F)\subset E'$ and
$\dim(E^{1,0}\oplus E^{0,1})=2>0$, we have $E'\cap E\ne\{0\}$,
hence $E'=E$ by~\eqref{eq:disjoint}. But then
\[
\dim H^{1,0}(F)
= \dim(W_1(F)\cap H^{1,0}(F))
\le \dim(E\cap H^{1,0}(F)) = 1,
\]
contradicting~\eqref{eq:h10-dim}. Therefore $F$ is not
$1$-formal. \qed

\subsection{Purity, non-formality, and the proof strategy}
\label{subsec:proof-remarks}

The proof of Theorem~\ref{thm:general} shows, via 
Corollary \ref{cor:mf-formal}, that non-formality of
$F(\A)$ implies non-trivial monodromy, which in turn forces
$W_1(F)=H^1(F;\C)_{\ne 1}\ne 0$. Thus $H^1(F;\C)$ carries
weights $1$ and $2$ simultaneously, and the mixed Hodge structure
on $H^1(F;\C)$ is not pure of weight $2$.

By Dupont's theorem \cite{Dp16}, purity of $H^k(U;\C)$ of weight
$2k$ for all $k$ is sufficient for formality; the special case
$k=1$ recovers Morgan's criterion \cite{Morgan}. For the Milnor fiber
$F(\A)$, Dupont's purity criterion requires $H^1(F;\C)$ to be pure of
weight $2$, i.e., $W_1(F)=0$. Whenever our arrangements carry
two or more reduced $(3,d)$-nets, $W_1(F)\ne 0$ and this
criterion fails, blocking the only available Hodge-theoretic
route to $1$-formality from purity alone.

However, the failure of weight purity on $H^1(F;\C)$ is not by
itself sufficient to conclude non-formality: there exist smooth
quasi-projective varieties with non-pure $H^1$ that are
nonetheless $1$-formal. What the proof of
Theorem~\ref{thm:general} does is extract finer information from
the Hodge decomposition---specifically, the constraint
$\dim(E\cap H^{1,0}(F))=1$ from the lifted pencil versus
$\dim H^{1,0}(F)\ge 2$ from the ACM depth count---to derive a
genuine contradiction with $1$-formality via the Tangent Cone
Theorem.

\subsection{Proof of Theorem~\ref{thm:monomial}}
\label{subsec:monomial-family}

By \cite[Ex.~5.4]{FY07}, the arrangement $\A=\A(3k,3k,3)$
supports a $3$-net of weight $3k$, with partition given by the
three factors of its defining polynomial
$Q=(z_1^{3k}-z_2^{3k})(z_1^{3k}-z_3^{3k})(z_2^{3k}-z_3^{3k})$
and Latin square corresponding to $\Z_{3k}$.
Since $3\mid 3k$, we have $\beta_3(\A)=2$
by \cite[Ex.~8.10]{PS-plms17}, so by
Theorem~\ref{thm:ps-nets}\eqref{ps4}, $\A$ supports
four distinct $3$-nets of weight $3k$.
In particular, $\A$ satisfies the hypothesis of
Theorem~\ref{thm:general}, and therefore
$F(\A)$ is not $1$-formal.
\qed

\begin{remark}
\label{rem:no-pincer}
For $k>1$, the arrangement $\A(3k,3k,3)$ has three flats of
multiplicity $3k$ in $L_2(\A)$, so the multiplicity hypothesis
of Corollary~\ref{cor:beta3} fails, and one must invoke
Theorem~\ref{thm:general} directly, as above.
For the monomial arrangement $\A(m,m,3)$ with $3\nmid m$,
Dimca \cite{Di19} shows that $\dim W_1(F)=2$.
It follows that $\dim H^{1,0}(F)=1$, which equals
$\dim(E\cap H^{1,0}(F))=1$ from any lifted pencil.
The pincer argument yields no contradiction, and
the question of $1$-formality of $F(\A(m,m,3))$ for
$3\nmid m$ remains open.
\end{remark}

\section{The \texorpdfstring{$\beta_3$}{beta 3} criterion and open problems}
\label{sect:questions}

The connection between our non-formality criterion and the
mod~$3$ Aomoto--Betti number $\beta_3(\A)$ is particularly
clean, thanks to the following result of Papa\-dima--Suciu
\cite{PS-plms17}.

\begin{theorem}[{\cite[Thm.~1.4]{PS-plms17}}]
\label{thm:ps-nets}
Let $\A$ be an arrangement such that $L_2(\A)$ has no flats of
multiplicity $3r$ for any $r>1$. Then:
\begin{enumerate}
\item \label{ps1}
$\beta_3(\A)\ne 0$ if and only if $\A$ supports a $3$-net.
\item \label{ps2}
$\beta_3(\A)\le 2$.
\item \label{ps3}
$e_3(\A)=\beta_3(\A)$.
\item \label{ps4}
The number of essential $3$-net components of $\VV^1_1(U)$
is $(3^{\beta_3(\A)}-1)/2$.
\end{enumerate}
\end{theorem}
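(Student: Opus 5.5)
This theorem is quoted from Papadima--Suciu \cite[Thm.~1.4]{PS-plms17}, so the natural plan is to reconstruct the argument there. Everything rests on comparing the mod-$3$ Aomoto complex $(A(\A)\otimes\FF_3,\delta_\sigma)$ with the depth of the reduced diagonal character $\bar\rho_n=\rho_n^{n/3}$.

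\textbf{Step 1: the upper bound $e_3(\A)\le\beta_3(\A)$.} We have $e_3(\A)=\depth_1(\bar\rho_n)=\dim H^1(U;\C_{\bar\rho_n})$. Since $\bar\rho_n$ takes values in the cube roots of unity, the twisted cohomology can be computed over $\Z[\zeta_3]$ and reduced modulo the prime $(1-\zeta_3)$, whose residue field is $\FF_3$. Reduction mod $(1-\zeta_3)$ sends the twisted differential to an $\FF_3$-linear differential. Working with a presentation of $\pi_1(U)$ (Fox calculus, or the $2$-skeleton up to $L_{\le2}$), its linear part becomes multiplication by $\sigma$ on $A^{\le 2}\otimes\FF_3$. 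Semicontinuity of ranks then gives $e_3\le\beta_3$.

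\textbf{Step 2: the bound $\beta_3\le2$, and the link between $\beta_3$ and nets.} Here the multiplicity hypothesis enters. If no flat has multiplicity $3r$ with $r>1$, then every $X\in L_2$ has either $q_X=3$ or $3\nmid q_X$. A cocycle $a=\sum a_He_H\in Z^1(\delta_\sigma)$ over $\FF_3$ satisfies local conditions at each flat:
\begin{itemize}
\item if $3\nmid q_X$, $a$ is constant on $\A_X$;
\item if $q_X=3$, $a$ restricted to $\A_X$ is either constant or takes all three values of $\FF_3$.
\end{itemize}
Modulo the constants (the coboundaries $\FF_3\sigma$), a nonconstant cocycle therefore defines a partition of $\A$ into the three level sets $a^{-1}(0)$, $a^{-1}(1)$, $a^{-1}(2)$, and these satisfy the $3$-net axioms. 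Conversely, a $3$-net gives such a cocycle. This proves \eqref{ps1}.

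Two independent cocycles $a,b$ give a map $\A\to\FF_3^2$. At each triple point this map is affine (collinear) by the local condition, and this forces the structure of a net whose classes are indexed by lines in $\FF_3^2$. To get \eqref{ps2}, suppose three independent cocycles existed. The resulting map $\A\to\FF_3^3$ would produce a $3$-net structure incompatible with the incidence counts, in the spirit of the Yuzvinsky bound on nets.

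I expect Step 2 to be the main obstacle, specifically bounding the dimension by $2$. My first attempt would be to show that each nonzero class in $H^1\cong\FF_3^{\beta_3}$, up to scalar, yields a distinct $3$-net. All such nets share the same base locus, namely the triple points on which the cocycles are nonconstant. With $\beta_3\ge3$ one would then obtain $13$ pairwise "orthogonal" $3$-nets, which a weight/count argument rules out.

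\textbf{Step 3: equality $e_3=\beta_3$, and the count of components.} For the reverse inequality, each $3$-net $\cN$ gives a component $T_\cN$ containing $\bar\rho_n$ by Lemma~\ref{lem:rho-on-torus}. The nets correspond to the points of $\P(H^1)=\P^{\beta_3-1}(\FF_3)$, and there are $(3^{\beta_3}-1)/2$ of them; this gives \eqref{ps4}.
\begin{itemize}
\item If $\beta_3=1$, a single net gives $e_3\ge1$.
\item If $\beta_3=2$, there are four nets, and two distinct components through the torsion point $\bar\rho_n$ give $e_3\ge2$ by Theorem~\ref{thm:acm}.
\end{itemize}
Combined with Step 1, this yields \eqref{ps3}.

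One point remains for \eqref{ps4}: every essential $3$-net component must arise in this way. I would argue this by sending a net to the mod-$3$ reduction of its class $u_i-u_1$, which is a cocycle. This map is injective on partitions, which completes the count.
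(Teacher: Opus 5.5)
The paper only quotes this result from \cite{PS-plms17}, so your outline has to stand on its own. Several parts are sound in outline. Step~1, the modular bound $e_3\le\beta_3$ obtained by reducing a linearized twisted complex modulo $(1-\zeta_3)$, is fine. So is the local description of $\FF_3$-cocycles and the resulting dictionary with $3$-nets, which gives \eqref{ps1}. So is the identification of $3$-nets with points of $\P\bigl(H^1(A\otimes\FF_3,\delta_\sigma)\bigr)$ via $S_3=\mathrm{AGL}(1,3)$, which gives \eqref{ps4}.

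The genuine gap is \eqref{ps2}. Everything you use in Step~2 depends only on $L_{\le 2}(\A)$, and at that level the bound is false. Consider the rank-$3$ matroid with ground set $\FF_3^3$ whose rank-$2$ flats are the $117$ affine lines of $\mathrm{AG}(3,3)$. Every flat has multiplicity $3$, so the hypothesis holds. A function $\tau\colon\FF_3^3\to\FF_3$ is a $\delta_\sigma$-cocycle iff it sums to $0$ on every affine line, i.e.\ iff $\tau$ is affine. Hence $\beta_3=3$, and the $13$ ``orthogonal'' $3$-nets you hope to exclude by counting are all present. So no incidence or weight count can prove \eqref{ps2}. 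One needs an input that uses the realization of $\A$ by complex hyperplanes, such as the geometry of the pencils $f_\cN$, realizability constraints on nets in $\CP^2$, or Hodge-theoretic information; your proposal supplies none.

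Your intermediate claim is also wrong: the nets do not share a base locus. Let $\Phi\colon\A\to\FF_3^{\beta_3}$ be given by a basis of cocycles modulo constants. For a linear form $\phi$, the net of $\phi\circ\Phi$ has as base points only those triple points whose image is a line with direction $w$ satisfying $\phi(w)\ne 0$. For the Ceva arrangement, the four nets use four different $9$-element subsets of the $12$ triple points.

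The gap also propagates to \eqref{ps3}. Your lower bound $e_3\ge\beta_3$ rests on Lemma~\ref{lem:rho-on-torus} and Theorem~\ref{thm:acm}, which yield at most $\depth_1(\bar\rho_n)\ge 2$. So the equality $e_3=\beta_3$ follows from Step~1 only after $\beta_3\le 2$ has been established independently.
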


Under the hypotheses of this theorem, the value of $\beta_3(\A)$
completely controls the situation:
\begin{itemize}[itemsep=3pt]
\item If $\beta_3(\A)=0$: there are no $3$-nets, $e_3(\A)=0$,
and the eigenvalue $e^{2\pi \ii/3}$ does not appear in
the monodromy on $H^1(F;\C)$.
\item If $\beta_3(\A)=1$: there is exactly one essential component,
giving $e_3(\A)=1$ and $\dim W_1(F)=2$.
Our method does not apply,
since there is no second component to invoke the ACM theorem.
\item If $\beta_3(\A)=2$: there are four essential components,
giving $e_3(\A)=2$ and $\dim W_1(F)=4$
by~\eqref{eq:charpoly}.
\end{itemize}

In particular, combining the case $\beta_3(\A)=2$ above with 
Theorem~\ref{thm:general}, we obtain:

\begin{corollary}
\label{cor:beta3}
Let $\A$ be an arrangement with $\beta_3(\A)=2$ and no flats of
multiplicity $3r$ for $r>1$. Then $F(\A)$ is not $1$-formal.
\end{corollary}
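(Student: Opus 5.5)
The plan is to reduce Corollary~\ref{cor:beta3} to Theorem~\ref{thm:general}. All the real work is in checking that the hypotheses of the corollary supply at least two distinct \emph{reduced} $3$-multinets on $\A$.

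First I would note that slicing preserves $L_{\le 2}(\A)$, and hence both $\beta_3(\A)$ and the multiplicity hypothesis. It also preserves $\pi_1(F(\A))$, and therefore $1$-formality (\S\ref{subsec:mf-formal}). So we may assume $\rank(\A)=3$.

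Next I would apply Theorem~\ref{thm:ps-nets}. The hypothesis that $L_2(\A)$ has no flats of multiplicity $3r$ with $r>1$ is exactly what that theorem requires. With $\beta_3(\A)=2$, part~\eqref{ps4} gives $(3^2-1)/2=4$ essential $3$-net components of $\VV^1_1(U)$. By the correspondence of \S\ref{subsec:nets}, each essential component comes from a multinet on all of $\A$, and distinct components come from distinct multinets. So $\A$ carries four distinct $3$-nets.

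The point I would check carefully is that these are genuinely reduced $3$-multinets in the sense of Definition~\ref{def:multinet}. A $3$-net is by definition reduced, since all $m_H=1$. Each of these nets is essential and $\rank(\A)=3$, so each has $|\XX|>1$ and is non-trivial. Choosing any two of the four then gives two distinct reduced $3$-multinets, and Theorem~\ref{thm:general} shows that $F(\A)$ is not $1$-formal.

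As a consistency check, part~\eqref{ps3} gives $e_3(\A)=2$, so $\dim W_1(F)=4$ and $\dim H^{1,0}(F)=2$. This matches what the pincer argument in the proof of Theorem~\ref{thm:general} uses.

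The main obstacle is the bookkeeping in the counting step. One must verify that the components counted in part~\eqref{ps4} are components of $\VV^1_1(U)$ arising from nets on the whole of $\A$, and not from sub-arrangements. One must also verify that two distinct components really correspond to distinct partitions. I would settle this with the bijection between essential components and multinets on $\A$ from \cite[Thm.~2.4]{FY07}, together with the fact that $P_\cN$ determines the partition through the classes $u_i$.
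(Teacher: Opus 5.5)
Your proposal is correct and follows the paper's own argument: the multiplicity hypothesis together with $\beta_3(\A)=2$ lets you apply Theorem~\ref{thm:ps-nets}\eqref{ps4}, which gives four essential $3$-net components, hence (nets being reduced) at least two distinct reduced $3$-multinets, so Theorem~\ref{thm:general} applies. The reduction to rank~$3$ is harmless but unnecessary, and in your consistency check $e_3(\A)=2$ by itself only yields $\dim W_1(F)\ge 4$ and $\dim H^{1,0}(F)\ge 2$ via \eqref{eq:charpoly}, which is all the pincer argument uses.
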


\begin{remark}
\label{rem:beta3-combinatorial}
The invariant $\beta_3(\A)$ is purely combinatorial: it is the
dimension of the mod~$3$ cocycle space modulo constants in the
Orlik--Solomon algebra, and depends only on $L_{\le 2}(\A)$.
Hence, Corollary~\ref{cor:beta3} gives a combinatorial sufficient
condition for non-$1$-formality of Milnor fibers, computable by
linear algebra over $\FF_3$.
\end{remark}

\subsection{Open questions}
\label{subsec:question}

The results of this paper naturally raise the following questions.

\begin{question}
\label{quest:beta3-1}
Is there an arrangement $\A$ with $\beta_3(\A)=1$ for which
$F(\A)$ is not $1$-formal?
\end{question}

Such an example would require a mechanism for non-formality
beyond the intersection of two essential components. By
Theorem~\ref{thm:ps-nets}, arrangements with $\beta_3(\A)=1$
have exactly one essential component, giving $e_3(\A)=1$ and
$\dim W_1(F)=2$, hence $\dim H^{1,0}(F)=1$. The pincer
argument yields no contradiction in this case, since
$\dim(E\cap H^{1,0}(F))=1=\dim H^{1,0}(F)$, leaving no gap.

\begin{question}
\label{quest:mf-braid}
Are the Milnor fibers of the braid arrangements $\A_n$ formal
for $n\ge 3$? In particular, is the Milnor fiber of $\A_3$
$1$-formal?
\end{question}

For $\A_3$, we have $\Delta_1(t)=(t-1)^5(t^2+t+1)$, so
$\dim W_1(F(\A_3))=2\ne 0$. Corollary~\ref{cor:mf-formal}
does not apply since the monodromy on $H^1(F;\C)$ is
non-trivial, Dupont's purity criterion \cite{Dp16} is
unavailable, and Theorem~\ref{thm:general} does not apply
since $\A_3$ supports only one essential component in
$\VV^1_1(U)$. For $\A_n$ with $n>3$, it was shown in
\cite{MP09} that $\Delta_1(t)=(t-1)^{\binom{n}{2}-1}$,
so $F(\A_n)$ is $1$-formal, but whether it is fully formal
remains open.

\begin{question}
\label{quest:beyond-3nets}
Are there arrangements not covered by Theorem~\ref{thm:general}
(e.g., with $\beta_3(\A)=0$) for which $F(\A)$ is nonetheless
not $1$-formal?
\end{question}

Theorem~\ref{thm:ps-nets} establishes $e_3(\A)=\beta_3(\A)$
under the hypothesis that $L_2(\A)$ has no flats of multiplicity
$3r$ for $r>1$. More generally, Papadima--Suciu conjectured
\cite[Conj.~7.5]{PS-plms17} that $e_p(\A)=\beta_p(\A)$ holds
without multiplicity restrictions, for all primes $p$. This was
verified for all complex reflection arrangements \cite{MPP}, but
disproved in general by Yoshinaga \cite{Yo20}: his
icosidodecahedral arrangement has $n=16$ hyperplanes, $15$
quadruple points and $30$ double points, trivial algebraic
monodromy over $\Q$ (so $e_2(\A)=0$), yet $\beta_2(\A)=1\ne 0$.
Thus the conjecture fails at $p=2$, and whether $e_3(\A)=\beta_3(\A)$
holds beyond the multiplicity hypothesis of
Theorem~\ref{thm:ps-nets} remains open.

We also note that no arrangement of rank at least $3$ with
$\beta_p(\A)\ne 0$ for $p>3$ is currently known; by
\cite{MP09}, no such example can arise among subarrangements
of non-exceptional Coxeter arrangements.

\begin{remark}
\label{rem:net-vs-multinet}
The hypothesis of Theorem~\ref{thm:general} requires two
distinct \emph{reduced} $3$-multinets. The proof uses
Lemma~\ref{lem:rho-on-torus}, which identifies $\bar\rho_n$
as a point of $T_\cN$ via the condition $f_\cN^*\rho_S=\bar\rho_n$;
this holds precisely when all $m_H=1$, i.e., when the multinet
is reduced.
\end{remark}

\begin{question}
\label{quest:multinet}
Does the conclusion of Theorem~\ref{thm:general} hold under
the weaker hypothesis that $\A$ supports at least two distinct
$3$-multinets with all multiplicities $m_H\equiv 1\pmod{3}$?
\end{question}

\subsection{The Hessian arrangement}
\label{subsec:hessian}

The Hessian arrangement $\A=\A(G_{25})$ is the reflection 
arrangement of the exceptional complex reflection group $G_{25}$
in $\C^3$. It consists of $12$ hyperplanes, whose projectivization 
forms the four completely reducible fibers of the cubic pencil 
in $\CP^2$ generated by $z_1^3+z_2^3+z_3^3$ and $z_1z_2z_3$.
It has $\mult(\A)=\{4\}$, $\beta_2(\A)=2$, and $\beta_p(\A)=0$ 
for $p\ne 2$. Its characteristic polynomial is
$\Delta_1(t)=(t-1)^{11}[(t+1)(t^2+1)]^2$, giving
$\dim W_1(F)=6$ \cite[Ex.~5.11]{PS-plms17}.

The Hessian supports a unique non-trivial $4$-net of weight $3$.
Yuzvinsky conjectured \cite{Yu12} that this is the only
non-trivial $(4,d)$-multinet on any hyperplane arrangement;
this has been verified for all complex reflection arrangements
of rank at least $3$ \cite{MPP}.
By \cite[Thm.~1.8]{PS-plms17}, no arrangement can simultaneously
support essential $3$-net and $4$-net components in $\VV^1_1(U)$.
Hence $\A(G_{25})$ supports no $3$-nets, and
Theorem~\ref{thm:general} does not apply.

\begin{question}
\label{quest:hessian}
Is the Milnor fiber $F(\A(G_{25}))$ of the Hessian arrangement
$1$-formal?
\end{question}

The $4$-net defines an admissible map $\psi\colon U\to\Sigma_{0,4}$,
and the pencil lifts to $\tilde\psi\colon F\to\hat{S}=\Sigma_{3,4}$,
giving a subspace $E=\tilde\psi^*(H^1(\hat{S};\C))$ of dimension
$9$ with MHS decomposition $\dim E^{1,1}=\dim E^{1,0}=\dim E^{0,1}=3$.
So far the Zuber strategy goes through. However, it encounters
two independent obstructions. First, the cup-product vanishing
of Lemma~\ref{lem:cup-vanish} fails for $k=4$: products of
non-conjugate eigenspaces land in non-trivial eigenspaces of
$H^2(F;\C)$, so one cannot conclude $W_1(F)\subset\RR^1_1(F)$.
Second, the dimension count gives
$\dim H^{1,0}(F)=3=\binom{k-1}{2}$, equality rather than
strict inequality, so the pincer produces no gap.
Alternative approaches might include computing Massey products
in $H^*(F;\C)$, or studying the monodromy on $H^2(F;\C)$
\cite[Rem.~5.1]{Di19}.

\subsection{Strong formality}
\label{subsec:strong-formal}

Kohno--Pajitnov \cite{KP} introduce a notion strictly stronger
than formality. Forming the direct sum of twisted de Rham algebras
\[
\bar{A}^*(M)=\bigoplus_{\rho\in\Ch(M)}A^*(M,\C_\rho),
\]
one says $M$ is \emph{strongly formal} if $\bar{A}^*(M)$ is
formal as a cdga. Strong formality implies that twisted Betti
numbers of generic deformations are computable from $H^*(M;\rho)$
alone, but the converse fails in general \cite{KP}.

Narkawicz \cite{Nark} constructs, for infinitely many rank-$2$ 
representations $\boldsymbol{\rho}=(\rho_1,\rho_2)$ in $H^1(U(\A_3);(\C^*)^2)$, 
an explicit non-vanishing Massey triple product in the twisted 
cohomology $H^*(U(\A_3); \C^2_{\boldsymbol{\rho}})$, 
showing that the twisted cdga $E^\bullet(U(\mathcal{A}_3), 
\C^2_{\boldsymbol{\rho}})$ is not $1$-formal.
In the Kohno--Pajitnov framework \cite{KP}, this obstructs 
degeneration of the spectral sequence for these representations.
Whether an analogous obstruction exists for rank-$1$ characters 
--- which would establish non-strong-formality of $U(\A_3)$ 
in the strict sense of \cite{KP} --- remains an open question, 
and is directly relevant to the problem of detecting non-formality 
of the Milnor fiber by purely topological means.

This raises the question of whether $U(\A_3)$ is strongly formal 
in the sense of \cite{KP}, and suggests that the twisted cohomology 
of arrangement complements may encode finer information about 
Milnor fibers than the Tangent Cone Theorem alone reveals.
More broadly, and independently of the strong formality question, 
all the non-$1$-formality results currently known for Milnor fibers 
of arrangements --- including Zuber's original example and the 
infinite family produced here --- rely on the mixed Hodge structure on 
$H^*(F(\A);\C)$ as the primary tool. Whether there exist 
other invariants, such as Massey products in the cohomology 
of $F(\A)$ or resonance varieties of $F(\A)$, capable of 
detecting non-formality by purely topological means, remains 
a compelling open problem.


\end{document}